\newtheorem{theorem}{THEOREM}
\newtheorem{Prop}{PROPOSITION}
\newtheorem{Cor}{COROLLARY}
\newtheorem{Lem}{LEMMA}
\begin{document} 

\title{\bf REMARKS ON HYPERGEOMETRIC CAUCHY NUMBERS}

\author{MIHO AOKI}
\address{ 
Department of Mathematics\\
Shimane University\\ 
Matsue, Shimane, Japan\\ 
}
\email{aoki@riko.shimane-u.ac.jp}
\thanks{The first author was supported by JSPS KAKENHI Grant Numbers 26400015.}
\author{TAKAO KOMATSU}
\address{ 
School of Mathematics and Statistics\\
Wuhan University\\
Wuhan 430072 China\\
}
\email{komatsu@whu.edu.cn}
%


\maketitle 
{\tiny
\begin{abstract}
For a positive integer $N$, hypergeometric Cauchy numbers $c_{N,n}$ are defined by 
$$
\frac{1}{{}_2 F_1(1,N;N+1;-x)}=\frac{(-1)^{N-1}x^N/N}{\log(1+x)-\sum_{n=1}^{N-1}(-1)^{n-1}x^n/n}=\sum_{n=0}^\infty c_{N,n}\frac{x^n}{n!}\,, 
$$  
where ${}_2 F_1(a,b;c;z)$ is the Gauss hypergeometric function. 
When $N=1$, $c_n=c_{1,n}$ are the classical Cauchy numbers. 
In 1875, Glaisher gave several interesting determinant expressions of numbers, including Bernoulli, Cauchy and Euler numbers (see (\ref{det:ber}), (\ref{det:cau}) and (\ref{det:eul}) in the text). 
Hypergeometric numbers can be recognized as one of the most natural extensions of the classical Cauchy numbers in terms of determinants (see Section \ref{sec:det}), though  many kinds of generalizations of the Cauchy numbers have been considered by many authors.   
In addition, there are some relations between the hypergeometric Cauchy numbers and the classical Cauchy numbers.  
In this paper, 
we give the determinant expressions of hypergeometric Cauchy numbers and their generalizations, and show some 
interesting expressions of hypergeometric Cauchy numbers.
As applications, we can get the inversion relations such that hypergeometric Cauchy numbers as $c_{N,n}/n!$ and the numbers $N/(N+n)$ are interchanged in terms of determinants of the so-called Hassenberg matrices.  
\end{abstract}
}
\noindent 
{\bf AMS Subject Classifications: } 11B75, 11B37, 11C20, 15A15, 33C05.
\\ \\
{\bf Keywords:} Cauchy numbers, hypergeometric Cauchy numbers, hypergeometric functions, determinants, recurrence relations.  
\\ \\
{\bf Abbreviated title:}   \\REMARKS ON HYPERGEOMETRIC CAUCHY NUMBERS

\begin{center}
\section{INTRODUCTION} 
\end{center}

Denote ${}_2 F_1(a,b;c;z)$ be the Gauss hypergeometric function defined by 
$$
{}_2 F_1(a,b;c;z)=\sum_{n=0}^\infty\frac{(a)^{(n)}(b)^{(n)}}{(c)^{(n)}}\frac{z^n}{n!}  
$$ 
with the rising factorial $(x)^{(n)}=x(x+1)\dots(x+n-1)$ ($n\ge 1$) and $(x)^{(0)}=1$. 
For $N\ge 1$, define the hypergeometric Cauchy numbers $c_{N,n}$ (\cite{Ko3}) by 
\begin{equation}  
\frac{1}{{}_2 F_1(1,N;N+1;-x)}=\frac{(-1)^{N-1}x^N/N}{\log(1+x)-\sum_{n=1}^{N-1}(-1)^{n-1}x^n/n}=\sum_{n=0}^\infty c_{N,n}\frac{x^n}{n!}\,.  
\label{def:hgc}
\end{equation}     
When $N=1$, $c_n=c_{1,n}$ are classical Cauchy numbers (\cite{C}) defined by 
$$
\frac{x}{\log(1+x)}=\sum_{n=0}^\infty c_n\frac{x^n}{n!}\,. 
$$ 
Notice that $b_n=c_n/n!$ are sometimes called the Bernoulli numbers of the second kind.  
In addition, the hypergeometric Cauchy polynomials $c_{M,N,n}(z)$ (\cite{Ko3}) are defined by 
\begin{align*} 
\frac{1}{(1+x)^z}\frac{1}{{}_2F_1(M,N;N+1;-x)}&=\frac{1}{(1+x)^z}\sum_{n=0}^\infty c_{M,N,n}\frac{x^n}{n!}\\
&=\sum_{n=0}^\infty c_{M,N,n}(z)\frac{x^n}{n!}\,. 
\end{align*}  
so that $c_{1,N,n}(0)=c_{N,n}$.  

Similar hypergeometric numbers are hypergeometric Bernoulli numbers $B_{N,n}$ and hypergeometric Euler numbers.  
For $N\ge 1$, define hypergeometric Bernoulli numbers $B_{N,n}$ (\cite{HN1,HN2,Ho1,Ho2,Kamano2,Ng}) by 
\begin{equation}  
\frac{1}{{}_1 F_1(1;N+1;x)}=\frac{x^N/N!}{e^x-\sum_{n=0}^{N-1}x^n/n!}=\sum_{n=0}^\infty B_{N,n}\frac{x^n}{n!}\,,   
\label{def:hgb} 
\end{equation}  
where ${}_1 F_1(a;b;z)$ is the confluent hypergeometric function defined by 
$$
{}_1 F_1(a;b;z)=\sum_{n=0}^\infty\frac{(a)^{(n)}}{(b)^{(n)}}\frac{z^n}{n!}\,.   
$$ 
When $N=1$, $B_{1,n}=B_n$ are classical Bernoulli numbers, defined by 
$$
\frac{x}{e^x-1}=\sum_{n=0}^\infty B_n\frac{x^n}{n!}\,. 
$$  
The hypergeometric Euler numbers $E_{N,n}$ (\cite{KZ}) are defined by 
$$ 
\frac{1}{{}_1 F_2(1;N+1,(2 N+1)/2;x^2/4)}=\sum_{n=0}^\infty E_{N,n}\frac{x^n}{n!}\,, 
$$   
where ${}_1 F_2(a;b,c;z)$ is the hypergeometric function defined by 
$$
{}_1 F_2(a;b,c;z)=\sum_{n=0}^\infty\frac{(a)^{(n)}}{(b)^{(n)}(c)^{(n)}}\frac{z^n}{n!}\,.
$$
When $N=0$, then $E_n=E_{0,n}$ are classical Euler numbers defined by 
$$ 
\frac{1}{\cosh x}=\sum_{n=0}^\infty E_n\frac{x^n}{n!}\,. 
$$ 

Several kinds of generalizations of the Cauchy numbers (or the Bernoulli numbers of the second kind) have been considered by many authors. For example, poly-Cauchy numbers \cite{Ko1}, multiple Cauchy numbers, shifted Cauchy numbers \cite{KS1}, generalized Cauchy numbers \cite{KLL}, incomplete Cauchy numbers \cite{Ko5,Ko9,KMS}, various types of $q$-Cauchy numbers \cite{CK,Ko2,Ko8,KS2}, Cauchy Carlitz numbers \cite{KK1,KK2}. The situations are similar and even more for Bernoulli numbers and Euler numbers.  
Hypergeometric numbers can be recognized as one of the most natural extensions of the classical numbers in terms of determinants, though  many kinds of generalizations of the classical numbers have been considered by many authors.   
In \cite{Ko10,KZ}, the hypergeometric Euler numbers $E_{N,2n}$ can be expressed as 
$$
E_{N, 2 n}=(-1)^n(2 n)!   
\left|
\begin{array}{cccc}
\frac{(2 N)!}{(2 N+2)!}&1&&\\
\frac{(2 N)!}{(2 N+4)!}&\ddots&\ddots&\\
\vdots&&\ddots&1\\
\frac{(2 N)!}{(2 N+2 n)!}&\cdots&\frac{(2 N)!}{(2 N+4)!}&\frac{(2 N)!}{(2 N+2)!}
\end{array} 
\right|\quad(N\ge 0,~n\ge 1)\,. 
$$ 
When $N=0$, this is reduced to a famous determinant expression of Euler numbers ({\it cf.} \cite[p.52]{Glaisher}): 
\begin{equation} 
E_{2n}=(-1)^n (2n)! 
\begin{vmatrix}   
\frac{1}{2!}& 1 &~& ~&~\\
\frac{1}{4!}&  \frac{1}{2!} & 1 &~&~\\
\vdots & ~  &  \ddots~~ &\ddots~~ & ~\\
\frac{1}{(2n-2)!}& \frac{1}{(2n-4)!}& ~&\frac{1}{2!} &  1\\
\frac{1}{(2n)!}&\frac{1}{(2n-2)!}& \cdots &  \frac{1}{4!} & \frac{1}{2!}
\end{vmatrix}\,.
\label{det:eul}
\end{equation}  
In addition,  When $N=1$, $E_{1,n}$ can be expressed by Bernoulli numbers as 
$E_{1,n}=-(n-1)B_n$ (\cite{KZ}). 
 
In \cite{AK}, the hypergeometric Bernoulli numbers $B_{N,n}$ can be expressed as 
$$
B_{N,n}=(-1)^n n!\left|
\begin{array}{ccccc} 
\frac{N!}{(N+1)!}&1&&&\\  
\frac{N!}{(N+2)!}&\frac{N!}{(N+1)!}&&&\\ 
\vdots&\vdots&\ddots&1&\\ 
\frac{N!}{(N+n-1)!}&\frac{N!}{(N+n-2)!}&\cdots&\frac{N!}{(N+1)!}&1\\ 
\frac{N!}{(N+n)!}&\frac{N!}{(N+n-1)!}&\cdots&\frac{N!}{(N+2)!}&\frac{N!}{(N+1)!}
\end{array} 
\right|\quad(N\ge 1,~n\ge 1)\,.  
$$ 
When $N=1$, we have a determinant expression of Bernoulli numbers (\cite[p.53]{Glaisher}):    
\begin{equation}  
B_n=(-1)^n n!\left|
\begin{array}{ccccc} 
\frac{1}{2!}&1&&&\\  
\frac{1}{3!}&\frac{1}{2!}&&&\\ 
\vdots&\vdots&\ddots&1&\\ 
\frac{1}{n!}&\frac{1}{(n-1)!}&\cdots&\frac{1}{2!}&1\\ 
\frac{1}{(n+1)!}&\frac{1}{n!}&\cdots&\frac{1}{3!}&\frac{1}{2!}
\end{array} 
\right|\,. 
\label{det:ber} 
\end{equation}
In addition, relations between $B_{N,n}$ and $B_{N-1,n}$ are shown in \cite{AK}.

In this paper, we shall give  similar determinant expression of hypergeometric Cauchy numbers and their generalizations.  
We also study some interesting relations between the hypergeometric Cauchy numbers and the classical Cauchy numbers. 
As applications, we can get the inversion relations such that hypergeometric Cauchy numbers as $c_{N,n}/n!$ and the numbers $N/(N+n)$ are interchanged in terms of determinants of the so-called Hassenberg matrices.  
%
%
%
\begin{center}
\end{center}
\begin{center}
\section{A DETERMINANT EXPRESSION OF THE HYPERGEOMETRIC CAUCHY NUMBERS}
\label{sec:det} 
\end{center}

From the definition (\ref{def:hgc}), we have 
\begin{equation}  
\sum_{i=0}^n\frac{(-1)^i c_{N,i}}{(N+n-i)i!}=0\quad(n\ge 1)
\label{rel:hgc} 
\end{equation}  
with $c_{N,0}=1$ (\cite[Proposition 1]{Ko3}).

By using this expression, the first few values of $c_{N,n}$ are given:  
{\small
\begin{align*}  
c_{N,0}&=1\,,\\
c_{N,1}&=\frac{N}{N+1}\,,\\
c_{N,2}&=-\frac{2 N}{(N+1)^2(N+2)}\,,\\
c_{N,3}&=\frac{6 N(N^2+N+2)}{(N+1)^3(N+2)(N+3)}\,,\\
c_{N,4}&=-\frac{4!N(N^5+5 N^4+14 N^3+24 N^2+20 N+12)}{(N+1)^4(N+2)^2(N+3)(N+4)}\,,\\
c_{N,5}&=\frac{5!N(N^7+8 N^6+35 N^5+96 N^4+160 N^3+184 N^2+116 N+48)}{(N+1)^5(N+2)^2(N+3)(N+4)(N+5)}\,. 
\end{align*}  
}
In \cite[Theorem 1]{Ko3}, an explicit expression of hypergeometric Cauchy numbers is given.
\\
\begin{Lem}  
For $N,n\ge 1$, we have 
$$ 
c_{N,n}=(-1)^n n!\sum_{r=1}^n(-N)^r\sum_{i_1+\cdots+i_r=n\atop i_1,\dots,i_r\ge 1}\frac{1}{(N+i_1)\cdots(N+i_r)}\,.
$$ 
\label{lem-hgc}
\end{Lem}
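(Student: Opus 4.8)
The plan is to start from the generating-function identity $1/{}_2F_1(1,N;N+1;-x)=\sum_{n\ge0}c_{N,n}x^n/n!$ and extract the coefficient of $x^n$ directly via a geometric-series expansion. First I would compute the denominator: since
\[
{}_2F_1(1,N;N+1;-x)=\sum_{k=0}^\infty\frac{(1)^{(k)}(N)^{(k)}}{(N+1)^{(k)}}\frac{(-x)^k}{k!}=\sum_{k=0}^\infty\frac{N(-x)^k}{N+k},
\]
because $(1)^{(k)}=k!$ and $(N)^{(k)}/(N+1)^{(k)}=N/(N+k)$. Hence ${}_2F_1(1,N;N+1;-x)=1+\sum_{k\ge1}\frac{N(-x)^k}{N+k}=1+u(x)$ where $u(x):=\sum_{k\ge1}\frac{(-1)^kN}{N+k}x^k$ has no constant term.

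Next I would write $\dfrac{1}{1+u(x)}=\sum_{r=0}^\infty(-1)^r u(x)^r$, valid as a formal power series since $u$ has zero constant term. The $r$-th power contributes
\[
(-1)^r u(x)^r=(-1)^r\Big(\sum_{k\ge1}\frac{(-1)^kN}{N+k}x^k\Big)^r=(-1)^r\!\!\sum_{i_1,\dots,i_r\ge1}\frac{(-1)^{i_1+\cdots+i_r}N^r}{(N+i_1)\cdots(N+i_r)}x^{i_1+\cdots+i_r}.
\]
Collecting the coefficient of $x^n$ over all $r$ (only $1\le r\le n$ contribute when $n\ge1$), and using $(-1)^r(-1)^{i_1+\cdots+i_r}=(-1)^r(-1)^n=(-1)^{n+r}$ on the set $i_1+\cdots+i_r=n$, gives
\[
[x^n]\frac{1}{{}_2F_1(1,N;N+1;-x)}=\sum_{r=1}^n(-1)^{n+r}N^r\!\!\sum_{i_1+\cdots+i_r=n\atop i_1,\dots,i_r\ge1}\frac{1}{(N+i_1)\cdots(N+i_r)}.
\]
Since $c_{N,n}=n!\,[x^n](\cdots)$ and $(-1)^{n+r}N^r=(-1)^n(-N)^r$, this is exactly the claimed formula.

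There is essentially no hard step here; the only points requiring a little care are the two simplifications of Pochhammer ratios — $(1)^{(k)}=k!$ and $(N)^{(k)}/(N+1)^{(k)}=N/(N+k)$, the latter following from the telescoping $(N)^{(k)}=N(N+1)\cdots(N+k-1)$ and $(N+1)^{(k)}=(N+1)\cdots(N+k)$ — and the bookkeeping of the sign $(-1)^{i_1+\cdots+i_r}=(-1)^n$ on each composition of $n$ into $r$ positive parts. Alternatively, one could verify the formula by checking that its right-hand side satisfies the defining recurrence \eqref{rel:hgc} together with $c_{N,0}=1$, but the direct power-series extraction above is cleaner and self-contained.
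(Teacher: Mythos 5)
Your proposal is correct: the coefficient extraction, the simplification $(N)^{(k)}/(N+1)^{(k)}=N/(N+k)$, and the sign bookkeeping $(-1)^r(-1)^{i_1+\cdots+i_r}=(-1)^n(-N)^r/N^r$ on compositions of $n$ all check out, and the geometric-series expansion of $1/(1+u)$ is legitimate as a formal power series because $u$ has no constant term. Note, however, that the paper does not prove Lemma \ref{lem-hgc} itself; it imports it from \cite[Theorem 1]{Ko3}, and the closest in-paper argument is the proof of Proposition \ref{prp1h}, which establishes the $r$-th power generalization by evaluating Hasse--Teichm\"uller derivatives at $x=0$ and invoking the quotient rule (\ref{quotientrule1}); specializing $r=1$ there (so that $D_1(e)=N/(N+e)$) recovers exactly your statement. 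Your route is the elementary counterpart of that machinery: the quotient-rule identity applied at $x=0$ with $f(0)=1$ encodes precisely the expansion $1/(1+u)=\sum_{k\ge 0}(-1)^k u^k$ that you use directly, so the combinatorial content (summing over compositions $i_1+\cdots+i_r=n$ with weights $N/(N+i_j)$) is the same. What the paper's formalism buys is a uniform treatment of $\bigl({}_2F_1(1,N;N+1;-x)\bigr)^{-r}$ via the product rule, where the inner sums run over $i_j\ge 0$ and produce the quantities $D_r(e)$; what your argument buys is a short, self-contained proof of the $r=1$ case needing nothing beyond formal power series, and it would extend to general $r$ as well by expanding $1/(1+u)^r$ with the binomial series instead of the plain geometric series.
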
  

Such values of $c_{N,n}$ can be expressed in terms of the determinant.

\begin{theorem}  
For $N,n\ge 1$, we have 
$$
c_{N,n}=n!\left|
\begin{array}{ccccc} 
\frac{N}{N+1}&1&&&\\  
\frac{N}{N+2}&\frac{N}{N+1}&&&\\ 
\vdots&\vdots&\ddots&1&\\ 
\frac{N}{N+n-1}&\frac{N}{N+n-2}&\cdots&\frac{N}{N+1}&1\\ 
\frac{N}{N+n}&\frac{N}{N+n-1}&\cdots&\frac{N}{N+2}&\frac{N}{N+1}
\end{array} 
\right|\,. 
$$ 
\label{th-cau1} 
\end{theorem}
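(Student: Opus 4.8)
The plan is to prove the determinant formula by induction on $n$, using the recurrence (\ref{rel:hgc}) as the single structural input. Recall that (\ref{rel:hgc}) says $\sum_{i=0}^n \frac{(-1)^i c_{N,i}}{(N+n-i)\,i!}=0$ for $n\ge 1$ with $c_{N,0}=1$. Dividing through and isolating the top term, this can be rewritten as a first-order-type recurrence expressing $c_{N,n}/n!$ in terms of $c_{N,i}/i!$ for $i<n$ with coefficients $\pm N/(N+n-i)$; more precisely, writing $a_i = c_{N,i}/i!$ one gets $\frac{1}{N}a_n = \sum_{i=0}^{n-1}(-1)^{n-i-1}\frac{1}{N+n-i}a_i$ after clearing the $i=n$ term (whose coefficient is $\tfrac{1}{N}$), so that
\begin{equation*}
\frac{c_{N,n}}{n!} = \sum_{i=0}^{n-1}(-1)^{n-i-1}\,\frac{N}{N+n-i}\,\frac{c_{N,i}}{i!}\,.
\end{equation*}
This is exactly the shape of recurrence that a lower Hessenberg determinant satisfies.

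Next I would set up the determinant side. Let $D_n$ denote the $n\times n$ lower Hessenberg determinant in the statement, with $(i,j)$ entry equal to $\tfrac{N}{N+i-j+1}$ for $j\le i$, equal to $1$ for $j=i+1$, and $0$ for $j>i+1$. Expanding $D_n$ along the last row (or using the standard recursion for Hessenberg determinants) yields $D_n = \sum_{i=1}^{n} (-1)^{n-i}\,m_{n,i}\,D_{i-1}$, where $m_{n,i}$ is the entry in the last row at column $i$, namely $m_{n,i} = \tfrac{N}{N+n-i+1}$, and $D_0 = 1$ by convention. Re-indexing $k=i-1$ gives $D_n = \sum_{k=0}^{n-1}(-1)^{n-1-k}\,\tfrac{N}{N+n-k}\,D_k$, which matches the recurrence for $c_{N,n}/n!$ term by term once we check the base case $D_1 = \tfrac{N}{N+1} = c_{N,1}/1!$. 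Hence by induction $D_n = c_{N,n}/n!$, i.e. $c_{N,n} = n!\,D_n$, which is the claim.

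The only genuinely non-routine point is verifying the Hessenberg expansion formula with the correct signs and the correct reading of which entry sits in which position; once the matrix entries are matched against the coefficients $(-1)^{n-1-k}\tfrac{N}{N+n-k}$ the induction is immediate. I would therefore spend the bulk of the write-up making the Hessenberg recursion precise — either by citing the standard result that an $n\times n$ lower Hessenberg matrix with superdiagonal entries all equal to $1$ has determinant $D_n=\sum_{i=1}^n(-1)^{n-i} a_{n,i}D_{i-1}$ (with $D_0=1$), or by a one-line Laplace expansion along the last column, noting that only the entries in rows $n-1$ and $n$ of the last column are nonzero, which produces a two-term recursion that telescopes into the displayed sum. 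As an alternative (and as a cross-check) one can bypass induction entirely: expand $D_n$ completely as a sum over the combinatorial paths indexing nonzero terms of a Hessenberg determinant, observe that each such term corresponds to a composition $i_1+\cdots+i_r=n$ with all parts $\ge 1$ contributing $(-1)^{n-r}\prod_{j}\tfrac{N}{N+i_j}$, and compare directly with the closed form in Lemma \ref{lem-hgc}, where $(-1)^n(-N)^r\sum\prod\tfrac{1}{N+i_j} = (-1)^{n+r}N^r\sum\prod\tfrac{1}{N+i_j} = (-1)^{n-r}N^r\sum\prod\tfrac{1}{N+i_j}$, so that $c_{N,n}/n! = \sum_{r=1}^n(-1)^{n-r}N^r\sum_{i_1+\cdots+i_r=n}\prod_j\tfrac{1}{N+i_j} = D_n$; matching the sign bookkeeping in this second approach is the main thing to be careful about.
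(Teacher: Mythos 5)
Your proposal is correct and follows essentially the same route as the paper: rewrite (\ref{rel:hgc}) as the recurrence $c_{N,n}/n!=N\sum_{i=0}^{n-1}\frac{(-1)^{n-i-1}}{N+n-i}\,c_{N,i}/i!$ and show by induction that the Hessenberg determinant satisfies the same recurrence with the same initial values. The only cosmetic difference is that you expand along the last row (using the standard block-triangular Hessenberg recursion), whereas the paper iterates a Laplace expansion along the last column; your optional cross-check via Lemma \ref{lem-hgc} is sound but not needed.
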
 

\noindent 
{\it Remark.}   
When $N=1$, we have a determinant expression of Cauchy numbers (\cite[p.50]{Glaisher}):  
\begin{equation} 
c_n=n!\left|
\begin{array}{ccccc} 
\frac{1}{2}&1&&&\\  
\frac{1}{3}&\frac{1}{2}&&&\\ 
\vdots&\vdots&\ddots&1&\\ 
\frac{1}{n}&\frac{1}{n-1}&\cdots&\frac{1}{2}&1\\ 
\frac{1}{n+1}&\frac{1}{n}&\cdots&\frac{1}{3}&\frac{1}{2}
\end{array} 
\right|\,. 
\label{det:cau}
\end{equation}
The value of this determinant, that is, $b_n=c_n/n!$ are called {\it Bernoulli numbers of the second kind}.  

\begin{proof}[Proof of Theorem \ref{th-cau1}] 
Put $b_{N,n}=c_{N,n}/n!$.  
Then, we shall prove that 
\begin{equation}  
b_{N,n}=\left|
\begin{array}{ccccc} 
\frac{N}{N+1}&1&&&\\  
\frac{N}{N+2}&\frac{N}{N+1}&&&\\ 
\vdots&\vdots&\ddots&1&\\ 
\frac{N}{N+n-1}&\frac{N}{N+n-2}&\cdots&\frac{N}{N+1}&1\\ 
\frac{N}{N+n}&\frac{N}{N+n-1}&\cdots&\frac{N}{N+2}&\frac{N}{N+1}
\end{array} 
\right|\,. 
\label{express-b2}
\end{equation} 
Since by (\ref{rel:hgc}) 
$$
c_{N,n}=\sum_{i=0}^{n-1}(-1)^{n-i-1}\frac{n!}{i!}\frac{N}{N+n-i}c_{N,i}\,,
$$  
we have 
$$
b_{N,n}=N\sum_{i=0}^{n-1}\frac{(-1)^{n-i-1}}{N+n-i}b_{N,i}\,. 
$$ 
When $n=1$, (\ref{express-b2}) is true as $b_{N,1}=N/(N+1)$.  
Assume that the result (\ref{express-b2}) is true up to $n-1$.  
Then, by expanding the last column of the right-hand side of (\ref{express-b2}), it is equal to  
\begin{align*}  
&\frac{N}{N+1}b_{N,n-1}
-\left|
\begin{array}{ccccc} 
\frac{N}{N+1}&1&&&\\  
\frac{N}{N+2}&\frac{N}{N+1}&&&\\ 
\vdots&\vdots&\ddots&1&\\ 
\frac{N}{N+n-2}&\frac{N}{N+n-3}&\cdots&\frac{N}{N+1}&1\\ 
\frac{N}{N+n}&\frac{N}{N+n-1}&\cdots&\frac{N}{N+3}&\frac{N}{N+2}
\end{array} 
\right|\\
&=\frac{N}{N+1}b_{N,n-1} 
-\frac{N}{N+2}b_{N,n-2} 
+\left|
\begin{array}{ccccc} 
\frac{N}{N+1}&1&&&\\  
\frac{N}{N+2}&\frac{N}{N+1}&&&\\ 
\vdots&\vdots&\ddots&1&\\ 
\frac{N}{N+n-3}&\frac{N}{N+n-4}&\cdots&\frac{N}{N+1}&1\\ 
\frac{N}{N+n}&\frac{N}{N+n-1}&\cdots&\frac{N}{N+4}&\frac{N}{N+3}
\end{array} 
\right|\\
&=\frac{N}{N+1}b_{N,n-1}-\frac{N}{N+2}b_{N,n-2}+\frac{N}{N+3}b_{N,n-3}\\
&\quad-\cdots
+(-1)^{n-2}\left|
\begin{array}{cc}
\frac{N}{N+1}&1\\
\frac{N}{N+n}&\frac{N}{N+n-1} 
\end{array}\right|
\\
&=N\sum_{i=0}^{n-1}\frac{(-1)^{n-i-1}}{N+n-i}b_{N,i}=b_{N,n}\,. 
\end{align*} 
Note that 
$$ 
b_{N,1}=\frac{N}{N+1}\quad\hbox{and}\quad b_{N,0}=1\,. 
$$  
\end{proof} 
 \begin{center}
\section{A RELATION BETWEEN  $c_{N,n}$ AND $c_{N-1,n}$}
\end{center}

In this section, we show the following relation between $c_{N,n}$ and $c_{N-1,n}$.
\\
\begin{Prop}
For $N\geq 2$ and $n\geq 1$, we have
$$
c_{N,n}= \sum_{m=0}^{n} \left( \frac{N}{1-N} \right)^m
\sum_{0\leq i_m <\cdots <i_1 <i_0= n} \frac{n!}{i_m!} \ c_{N-1,i_m} \prod_{k=1}^m \frac{ c_{N-1, i_{k-1}-i_k+1} }{(i_{k-1}-i_k+1)!},
$$
where $i_0=n$.
\label{prp4}
\end{Prop}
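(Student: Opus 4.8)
The plan is to pass to the exponential generating function
$$
g_N(x):=\sum_{n=0}^\infty c_{N,n}\frac{x^n}{n!}=\frac{1}{{}_2F_1(1,N;N+1;-x)}
$$
and to first derive a closed recurrence relating $g_N$ and $g_{N-1}$ directly from (\ref{def:hgc}). By definition,
$$
\log(1+x)-\sum_{n=1}^{N-1}\frac{(-1)^{n-1}x^n}{n}=\frac{(-1)^{N-1}x^N/N}{g_N(x)},
$$
and the same identity holds with $N$ replaced by $N-1$ (this is where $N\ge2$ is used, so that $g_{N-1}$ is defined). The two left-hand sides differ by exactly the single term $(-1)^{N-2}x^{N-1}/(N-1)$; subtracting the two identities and dividing by $(-1)^{N-2}x^{N-1}$ gives the linear relation
$$
\frac{1}{(N-1)\,g_{N-1}(x)}+\frac{x}{N\,g_N(x)}=\frac{1}{N-1},
$$
equivalently
$$
g_N(x)=\frac{(N-1)\,x\,g_{N-1}(x)}{N\bigl(g_{N-1}(x)-1\bigr)}.
$$

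Next I would split off the lowest-order part of $g_{N-1}$. Since $c_{N-1,0}=1$ and $c_{N-1,1}=(N-1)/N$, setting $P(x):=\sum_{j\ge1}\frac{c_{N-1,j+1}}{(j+1)!}x^j$ (a formal power series with $P(0)=0$) yields
$$
g_{N-1}(x)-1=\frac{N-1}{N}\,x\Bigl(1+\tfrac{N}{N-1}P(x)\Bigr),
$$
so that the recurrence collapses to
$$
g_N(x)=\frac{g_{N-1}(x)}{1+\frac{N}{N-1}P(x)}=g_{N-1}(x)\sum_{m=0}^\infty\Bigl(\frac{N}{1-N}\Bigr)^{m}P(x)^m,
$$
the geometric expansion being a legitimate identity of formal power series because $P(0)=0$. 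Note that it is precisely the sign in $-\frac{N}{N-1}=\frac{N}{1-N}$ that produces the constant appearing in the statement.

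Finally I would read off the coefficient of $x^n$. Multiplying out $g_{N-1}(x)\,P(x)^m$, its $x^n$-coefficient equals
$$
\sum_{\ell+d_1+\cdots+d_m=n\atop \ell\ge0,\ d_k\ge1}\frac{c_{N-1,\ell}}{\ell!}\prod_{k=1}^m\frac{c_{N-1,d_k+1}}{(d_k+1)!},
$$
and only $0\le m\le n$ can contribute since each $d_k\ge1$. Reindexing by the partial sums $i_m:=\ell$ and $i_{k-1}:=i_k+d_k$ (so that $i_0=n$, $\ n=i_0>i_1>\cdots>i_m\ge0$, and $d_k=i_{k-1}-i_k$) turns this into
$$
\frac{c_{N,n}}{n!}=\sum_{m=0}^n\Bigl(\frac{N}{1-N}\Bigr)^{m}\sum_{0\le i_m<\cdots<i_1<i_0=n}\frac{c_{N-1,i_m}}{i_m!}\prod_{k=1}^m\frac{c_{N-1,i_{k-1}-i_k+1}}{(i_{k-1}-i_k+1)!},
$$
and multiplying through by $n!$ is exactly the asserted formula.

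Most of this is routine once the recurrence for $g_N$ is established; the only points requiring care are the bookkeeping of the powers of $-1$ and of the factors $1/N$, $1/(N-1)$ in the subtraction step, and the clean translation between the composition description $\ell+d_1+\cdots+d_m=n$ and the strictly decreasing chain $n=i_0>i_1>\cdots>i_m\ge0$ used in the statement of the proposition. I expect the subtraction/sign bookkeeping to be the main obstacle to stating cleanly, though it is not deep.
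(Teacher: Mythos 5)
Your proof is correct, but it takes a genuinely different route from the paper's for the main combinatorial step. Both arguments start from the same functional equation: the paper derives $\tfrac{(1-N)x}{N}\,g_{N-1}(x)=g_N(x)\bigl(1-g_{N-1}(x)\bigr)$, which is exactly your relation $g_N(x)=\tfrac{(N-1)x\,g_{N-1}(x)}{N(g_{N-1}(x)-1)}$, obtained there by multiplying out the defining identity and dividing by $\log(1+x)-\sum_{n=1}^{N-2}(-1)^{n-1}x^n/n$ rather than by your subtract-and-divide-by-a-monomial step. From that point the paths diverge: the paper compares coefficients to obtain an intermediate recurrence (its Lemma \ref{lem1}), namely $c_{N,n}=c_{N-1,n}-\tfrac{N}{(n+1)(N-1)}\sum_{m=0}^{n-1}\binom{n+1}{m}c_{N,m}c_{N-1,n-m+1}$, and then proves the proposition by induction on $n$, substituting the induction hypothesis into that recurrence and reorganizing the nested sums over chains; you instead solve the functional equation in closed form, writing $g_{N-1}(x)-1=\tfrac{N-1}{N}x\bigl(1+\tfrac{N}{N-1}P(x)\bigr)$ with $P(x)=\sum_{j\ge1}\tfrac{c_{N-1,j+1}}{(j+1)!}x^j$, so that $g_N=g_{N-1}\sum_{m\ge0}\bigl(\tfrac{N}{1-N}\bigr)^mP^m$, and read off the coefficient of $x^n$, with the strictly decreasing chain $n=i_0>i_1>\cdots>i_m\ge0$ arising naturally as partial sums of the composition $d_1+\cdots+d_m$. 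Your version is shorter, avoids induction entirely, and makes transparent where both the factor $\bigl(\tfrac{N}{1-N}\bigr)^m$ and the chain structure come from; the paper's route has the advantage that the intermediate recurrence of Lemma \ref{lem1} is of independent interest and is stated separately. The only points to keep tidy in your write-up are the ones you already flag: the division by $g_{N-1}-1$ should be phrased as cancelling the factor $x$ (or working in Laurent series) since $g_{N-1}-1$ is not invertible in the power series ring, and the geometric expansion is justified precisely because $P(0)=0$.
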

\bigskip
\noindent
{\it Examples}

\begin{itemize}
\item[{\rm (i)}] $\displaystyle{ c_{N,1}=c_{N-1,1}+\frac{N}{1-N} \times \frac{ c_{N-1,0} c_{N-1,2}}{2} }$
\item[{\rm (ii)}] $\displaystyle{ c_{N,2}=c_{N-1,2}+\frac{N}{1-N} \left( \frac{c_{N-1,3}}{3}+c_{N-1,1}c_{N-1,2} \right) 
+\left(
\frac{N}{1-N} \right)^2 \times \frac{c_{N-1,2}^2}{2}}$
\end{itemize}

\begin{Lem}
For $N\geq 2$ and $n\geq 0$, we have
$$
c_{N,n}=c_{N-1,n} -\frac{N}{(n+1)(N-1)} \sum_{m=0}^{n-1}  \binom{n+1}{m} c_{N,m}c_{N-1,n-m+1}.
$$
\label{lem1}
\end{Lem}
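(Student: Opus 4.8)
The plan is to pass to generating functions: from the closed form in (\ref{def:hgc}) I will extract a single functional equation tying $f_N$ to $f_{N-1}$, and then read off Lemma~\ref{lem1} by comparing coefficients. Throughout I would write $b_{N,n}=c_{N,n}/n!$ (as in the proof of Theorem~\ref{th-cau1}) and $f_N(x)=\sum_{n\ge 0}c_{N,n}x^n/n!=\sum_{n\ge 0}b_{N,n}x^n$, so that $b_{N,0}=1$ and $b_{N,1}=c_{N,1}=N/(N+1)$.

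First I would rewrite (\ref{def:hgc}) as the formal power series identity
$$
f_N(x)\,S_N(x)=\frac{(-1)^{N-1}x^N}{N},\qquad S_N(x):=\log(1+x)-\sum_{n=1}^{N-1}\frac{(-1)^{n-1}x^n}{n}=\sum_{n\ge N}\frac{(-1)^{n-1}x^n}{n}\,,
$$
together with the analogous identity with $N$ replaced by $N-1$. Since $S_{N-1}(x)=S_N(x)+(-1)^{N-2}x^{N-1}/(N-1)$, substituting $S_N=(-1)^{N-1}x^N/(N f_N)$ and $S_{N-1}=(-1)^{N-2}x^{N-1}/((N-1)f_{N-1})$ and clearing denominators yields, after a short simplification,
$$
f_N(x)\bigl(1-f_{N-1}(x)\bigr)=-\frac{(N-1)x}{N}\,f_{N-1}(x)\,.
$$

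Next I would extract coefficients. Because $b_{N-1,0}=1$ we have $1-f_{N-1}(x)=-\sum_{j\ge 1}b_{N-1,j}x^j$, so comparing the coefficient of $x^{n+1}$ on the two sides of the functional equation gives, for every $n\ge 0$,
$$
\sum_{m=0}^{n}b_{N,m}\,b_{N-1,n+1-m}=\frac{N-1}{N}\,b_{N-1,n}\,.
$$
Splitting off the $m=n$ term and using $b_{N-1,1}=c_{N-1,1}=(N-1)/N$ turns this into $b_{N,n}=b_{N-1,n}-\frac{N}{N-1}\sum_{m=0}^{n-1}b_{N,m}b_{N-1,n+1-m}$; multiplying through by $n!$ and using $\frac{n!}{m!\,(n+1-m)!}=\frac{1}{n+1}\binom{n+1}{m}$ produces exactly the asserted identity (the case $n=0$ being just $c_{N,0}=c_{N-1,0}=1$ with an empty sum).

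The computation is elementary; the only steps that need care are the derivation of the functional equation — where the signs $(-1)^{N-1}$, $(-1)^{N-2}$ and the exponents $x^N$ versus $x^{N-1}$ must be tracked exactly, and where one should note that $f_N$ is a genuine formal power series with $f_N(0)=1$ so that all the reciprocals used make sense — and the index shift $n\mapsto n+1$ when passing from the coefficient identity to the form stated in Lemma~\ref{lem1}. I do not expect any deeper obstacle. (As a byproduct, iterating this one-step recurrence, with each step contributing a factor $N/(1-N)$, should recover the nested-sum formula of Proposition~\ref{prp4}.)
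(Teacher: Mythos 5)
Your proposal is correct and follows essentially the same route as the paper: both derive the functional equation $f_N(x)\bigl(1-f_{N-1}(x)\bigr)=\frac{(1-N)x}{N}f_{N-1}(x)$ from (\ref{def:hgc}) (the paper by dividing the defining identity for $f_N$ by $\log(1+x)-\sum_{n=1}^{N-2}(-1)^{n-1}x^n/n$, you by eliminating $S_N$, $S_{N-1}$) and then compare coefficients and isolate the top term of the convolution. The only cosmetic difference is that you read off the coefficient of $x^{n+1}$ and split off the $m=n$ term, whereas the paper works with $x^n$, splits off $m=n-1$, and shifts $n\mapsto n+1$ at the end; the computations are identical in substance.
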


\begin{proof}
From (\ref{def:hgc}), we have 
\begin{align*}
(-1)^{N-1} \times  \frac{x^N}{N} & =\left( \sum_{n=0}^{\infty} c_{N,n} \frac{x^n}{n!} \right) \left\{ {\rm log}(1+x)-\sum_{n=1}^{N-2} (-1)^{n-1} \frac{x^n}{n} -(-1)^{N-2} \frac{x^{N-1}}{N-1} \right\}.
\end{align*}
By dividing ${\rm log}(1+x)-\sum_{n=1}^{N-2} (-1)^{n-1}x^n/n$, we have
\begin{align*}
\frac{1-N}{N} \ x \times \sum_{n=0}^{\infty} c_{N-1,n} \frac{x^n}{n!} & = \left( \sum_{n=0}^{\infty} c_{N,n} \frac{x^n}{n!} \right) \left(1-\sum_{n=0}^{\infty} c_{N-1,n} \frac{x^n}{n!} \right),
\end{align*}
and hence
\begin{align*}
\sum_{n=1}^{\infty} \frac{(1-N)n}{N} \ c_{N-1,n-1} \frac{x^n}{n!}  & =\sum_{n=0}^{\infty} c_{N,n} \frac{x^n}{n!} -
\sum_{m=0}^{\infty} \sum_{k=0}^{\infty} c_{N,m}c_{N-1,k} \frac{x^{m+k}}{m!k!} \\
& =\sum_{n=0}^{\infty} c_{N,n}\ \frac{x^n}{n!} - \sum_{n=0}^{\infty} \sum_{m =0}^{n} c_{N,m} c_{N-1,n-m}\
\frac{x^n}{m!(n-m)!} \\
& =\sum_{n=0}^{\infty} \left\{ c_{N,n}- \sum_{m=0}^{n} \binom{n}{m}c_{N,m}c_{N-1,n-m} \right\} \frac{x^n}{n!}.
\end{align*}
Therefore,  we have 
\begin{align*}
c_{N-1,n-1} & =\frac{N}{(1-N)n}\left\{c_{N,n}-\sum_{m=0}^{n} \binom{n}{m} c_{N,m}c_{N-1,n-m}\right\} \\
& =\frac{N}{(1-N)n}\left\{ -\binom{n}{n-1} c_{N,n-1}c_{N-1,1} -\sum_{m=0}^{n-2}\binom{n}{m} c_{N,m}c_{N-1,n-m}\right\} \\
& =\frac{N}{(1-N)n}\left\{ \frac{(1-N)n}{N}  \ c_{N,n-1}-\sum_{m=0}^{n-2}\binom{n}{m} c_{N,m}c_{N-1,n-m}\right\} \\
& = c_{N,n-1}-\frac{N}{(1-N)n} \sum_{m=0}^{n-2}\binom{n}{m} c_{N,m}c_{N-1,n-m},\\
\end{align*}
for $n\geq 1$, and the proof is complete.
\end{proof}

\begin{proof}[Proof of Proposition \ref{prp4}]
We give the proof by induction for $n$.
In the case $n=1$, the assertion means  $$c_{N,1}=c_{N-1,1}+\frac{N}{1-N} \times \frac{ c_{N-1,0} c_{N-1,2}}{2}, $$ 
and this equality follows from $c_{N,1}=N/(N+1),\ c_{N-1,1}=(N-1)/N, \ c_{N-1,0}=1$ and $c_{N-1,2}=-2(N-1)/N^2(N+1)$.
Assume that the assertion holds up to $n-1$. By Lemma \ref{lem1}, we have

{\small
\begin{align*}
c_{N,n} &=c_{N-1,n} -\frac{N}{(n+1)(N-1)} \sum_{i_1=0}^{n-1}\binom{n+1}{i_1} c_{N,i_1}c_{N-1,n-i_1+1}   \\
& =c_{N-1,n} -\frac{N}{(n+1)(N-1)} \sum_{i_1=0}^{n-1}\binom{n+1}{i_1} c_{N-1,n-i_1+1}   \\
& \quad \times \sum_{m=1}^{i_1} \left( \frac{N}{1-N} \right)^{m-1} \sum_{ 0\leq i_m <\cdots <i_1} \frac{i_1 !}{i_m!}  c_{N-1,i_m}
\prod_{k=2}^m \frac{c_{N-1,i_{k-1}-i_k+1} }{(i_{k-1}-i_k+1)!} \\
& =c_{N-1,n} + \sum_{i_1=0}^{n-1}  \sum_{m=1}^{i_1}\sum_{ 0\leq i_m <\cdots <i_1} 
 \left( \frac{N}{1-N} \right)^{m}  \frac{n !}{i_m!}  c_{N-1,i_m}
\prod_{k=1}^m \frac{c_{N-1,i_{k-1}-i_k+1} }{(i_{k-1}-i_k+1)!} \\
& =c_{N-1,n} +\sum_{m=1}^{n}   \left( \frac{N}{1-N} \right)^{m}  \sum_{ 0\leq i_m <\cdots <i_1<i_0=n} 
\frac{n !}{i_m!}  c_{N-1,i_m}
\prod_{k=1}^m \frac{c_{N-1,i_{k-1}-i_k+1} }{(i_{k-1}-i_k+1)!} \\
& =\sum_{m=0}^{n}   \left( \frac{N}{1-N} \right)^{m}  \sum_{ 0\leq i_m <\cdots <i_1<i_0=n} 
\frac{n !}{i_m!}  c_{N-1,i_m}
\prod_{k=1}^m \frac{c_{N-1,i_{k-1}-i_k+1} }{(i_{k-1}-i_k+1)!}.
\end{align*}
}
\end{proof}
\vspace{1.5cc}

\begin{center}
\section{MULTIPLE HYPERGEOMETRIC CAUCHY NUMBERS}
\end{center}

For positive integers $N$ and $r$, define the {\it hypergeometric Cauchy numbers} $c_{N,n}^{(r)}$ by the generating function 
\begin{align}  
\frac{1}{\bigl({}_2 F_1(1,N;N+1;-x)\bigr)^r}&=\left(\frac{(-1)^{N-1}x^N/N}{\log(1+x)-\sum_{n=1}^{N-1}(-1)^{n-1}x^n/n}\right)^r\notag\\
&=\sum_{n=0}^\infty c_{N,n}^{(r)}\frac{x^n}{n!}\,.  
\label{def:higher-hgc} 
\end{align}  

From the definition (\ref{def:higher-hgc}),  
\begin{align*}  
&\left(\frac{(-1)^{N-1}x^N}{N}\right)^r\\
&=\left(\sum_{i=0}^\infty\frac{(-1)^{i+N-1}x^{N+i}}{N+i}\right)^r\left(\sum_{n=0}^\infty c_{N,n}^{(r)}\frac{x^n}{n!}\right)\\
&=x^{r N}(-1)^{r(N-1)}\left(\sum_{l=0}^\infty\sum_{i_1+\cdots+i_r=l\atop i_1,\dots,i_r\ge 0}\frac{(-1)^l l!}{(N+i_1)\cdots(N+i_r)}\frac{x^l}{l!}\right)\left(\sum_{n=0}^\infty c_{N,n}^{(r)}\frac{x^n}{n!}\right)\\
&=x^{r N}(-1)^{r(N-1)}\sum_{n=0}^\infty\sum_{m=0}^n\sum_{i_1+\cdots+i_r=n-m\atop i_1,\dots,i_r\ge 0}\binom{n}{m}\frac{(-1)^{n-m}(n-m)!}{(N+i_1)\cdots(N+i_r)}c_{N,m}^{(r)}\frac{x^n}{n!}\,. 
\end{align*}

Hence, as a generalization of Proposition (\ref{rel:hgc}), for $n\ge 1$, we have the following.  
\\
\begin{Prop}  
$$ 
\sum_{m=0}^n\sum_{i_1+\cdots+i_r=n-m\atop i_1,\dots,i_r\ge 0}\frac{(-1)^{n-m}c_{N,m}^{(r)}}{m!(N+i_1)\cdots(N+i_r)}=0\,.  
$$ 
\label{prp0h} 
\end{Prop}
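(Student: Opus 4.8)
The plan is to read the claimed identity directly off the computation that immediately precedes the statement of Proposition~\ref{prp0h}, just as equation~(\ref{rel:hgc}) was read off the $r=1$ case. Starting from the definition~(\ref{def:higher-hgc}), one writes
$$
\left(\frac{(-1)^{N-1}x^N/N}{\log(1+x)-\sum_{n=1}^{N-1}(-1)^{n-1}x^n/n}\right)^r=\sum_{n=0}^\infty c_{N,n}^{(r)}\frac{x^n}{n!}\,,
$$
clears the denominator, and expands. The key point is the expansion
$$
\log(1+x)-\sum_{n=1}^{N-1}(-1)^{n-1}\frac{x^n}{n}=\sum_{i=0}^\infty(-1)^{N+i-1}\frac{x^{N+i}}{N+i}=(-1)^{N-1}x^N\sum_{i=0}^\infty\frac{(-1)^i x^i}{N+i}\,,
$$
so that raising to the $r$-th power and multiplying by $\bigl((-1)^{N-1}x^N/N\bigr)^{-r}\cdot\bigl(\sum c_{N,n}^{(r)}x^n/n!\bigr)$ gives, after the $r$-fold Cauchy product of the geometric-type series, exactly the displayed triple identity in the excerpt:
$$
\left(\frac{(-1)^{N-1}x^N}{N}\right)^r=x^{rN}(-1)^{r(N-1)}\sum_{n=0}^\infty\left(\sum_{m=0}^n\sum_{i_1+\cdots+i_r=n-m\atop i_1,\dots,i_r\ge0}\binom{n}{m}\frac{(-1)^{n-m}(n-m)!}{(N+i_1)\cdots(N+i_r)}c_{N,m}^{(r)}\right)\frac{x^n}{n!}\,.
$$

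Next I would compare coefficients of $x^{rN+n}$ on both sides. The left-hand side is $(-1)^{r(N-1)}x^{rN}/N^r$, a single monomial; dividing both sides by $x^{rN}(-1)^{r(N-1)}$, the left side becomes the constant $1/N^r$, whose only nonzero Taylor coefficient is at $x^0$. Hence for every $n\ge1$ the coefficient of $x^n$ on the right must vanish:
$$
\sum_{m=0}^n\sum_{i_1+\cdots+i_r=n-m\atop i_1,\dots,i_r\ge0}\binom{n}{m}\frac{(-1)^{n-m}(n-m)!}{(N+i_1)\cdots(N+i_r)}c_{N,m}^{(r)}=0\,.
$$
Finally, dividing through by $n!$ and using $\binom{n}{m}(n-m)!/n!=1/m!$ turns this into the asserted form
$$
\sum_{m=0}^n\sum_{i_1+\cdots+i_r=n-m\atop i_1,\dots,i_r\ge0}\frac{(-1)^{n-m}c_{N,m}^{(r)}}{m!(N+i_1)\cdots(N+i_r)}=0\,,
$$
which is the statement of Proposition~\ref{prp0h}. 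One should also note the base case $n=0$: the sum reduces to $c_{N,0}^{(r)}/N^r$, consistent with $c_{N,0}^{(r)}=1$, so the restriction $n\ge1$ is exactly right.

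There is essentially no obstacle here beyond bookkeeping, since the hard work (the $r$-fold Cauchy product and the reindexing) is already carried out in the displayed calculation preceding the statement; the proof is just ``extract the coefficient of $x^n$, $n\ge1$.'' The only points requiring a moment's care are the convergence/formal-power-series justification for interchanging the $r$ sums in the Cauchy product (routine, since everything is a formal power series over $\mathbb{Q}$, or alternatively all series converge for $|x|<1$), and the verification that the combinatorial simplification $\binom{n}{m}(n-m)!=n!/m!$ is applied correctly so that the factor $n!$ cancels cleanly. I would therefore write the proof as a short paragraph: recall the displayed expansion, observe that the left-hand side contributes only to the constant term after normalization, equate coefficients of $x^n$ for $n\ge1$, divide by $n!$, and simplify the binomial factor.
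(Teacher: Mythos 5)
Your argument is correct and is essentially the paper's own: the paper states the proposition by reading it off the displayed expansion of $\bigl((-1)^{N-1}x^N/N\bigr)^r$ via the $r$-fold Cauchy product, exactly as you do, with the coefficient comparison and the simplification $\binom{n}{m}(n-m)!/n!=1/m!$ left implicit. Your write-up just makes those routine bookkeeping steps explicit, so there is nothing to change.
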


By using Proposition \ref{prp0h} or 
\begin{equation}  
c_{N,n}^{(r)}=-n! N^r\sum_{m=0}^{n-1}\sum_{i_1+\cdots+i_r=n-m\atop i_1,\dots,i_r\ge 0}\frac{(-1)^{n-m}c_{N,m}^{(r)}}{m!(N+i_1)\cdots(N+i_r)}
\label{eq0h}
\end{equation}  
with $c_{N,0}^{(r)}=1$ ($N\ge 1$), 
some values of $c_{N,n}^{(r)}$ ($0\le n\le 4$) are explicitly given by the following. 
{\small
\begin{align*}  
c_{N,0}^{(r)}&=1\,,\\ 
c_{N,1}^{(r)}&=\frac{r N}{N+1}\,,\\
c_{N,2}^{(r)}&=\frac{r(r+1)N^2}{(N+1)^2}-\frac{2 r N}{N+2}\,,\\ 
c_{N,3}^{(r)}&=\frac{r(r+1)(r+2)N^3}{(N+1)^3}-\frac{6 r(r+1)N^2}{(N+1)(N+2)}
+\frac{6 r N}{N+3}\,,\\  
c_{N,4}^{(r)}&=\frac{r(r+1)(r+2)(r+3)N^4}{(N+1)^4}-\frac{12 r(r+1)(r+2)N^3}{(N+1)^2(N+2)}\\
&\quad +\frac{24 r(r+1)N^2}{(N+1)(N+3)}+\frac{12 r(r+1)N^2}{(N+2)^2}
 -\frac{24 r N}{N+4}\,. 
\end{align*} 
}

As a generalization of Lemma \ref{lem-hgc}, we have an explicit expression of $c_{N,n}^{(r)}$.  

\begin{Prop} 
For $N,n\ge 1$, we have 
$$
c_{N,n}^{(r)}=n!\sum_{k=1}^n(-1)^{n-k}\sum_{e_1+\cdots+e_k=n\atop e_1,\dots,e_k\ge1}D_r(e_1)\cdots D_r(e_k)\,,
$$
where 
\begin{equation} 
D_r(e)=
\sum_{i_1+\cdots+i_r=e\atop i_1,\dots,i_r\ge 0}\frac{N^r}{(N+i_1)\cdots(N+i_r)}\,.  
\label{mre} 
\end{equation}  
\label{prp1h}  
\end{Prop}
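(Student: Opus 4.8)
\textbf{Proof proposal for Proposition \ref{prp1h}.}

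The plan is to mimic the derivation of Lemma \ref{lem-hgc} (i.e.\ the $r=1$ case) but now working with the $r$-th power of the defining series. First I would rewrite the defining relation (\ref{def:higher-hgc}) as
\begin{equation*}
\sum_{n=0}^\infty c_{N,n}^{(r)}\frac{x^n}{n!}
=\left(\frac{\log(1+x)-\sum_{n=1}^{N-1}(-1)^{n-1}x^n/n}{(-1)^{N-1}x^N/N}\right)^{-r}
=\bigl(1+g(x)\bigr)^{-r},
\end{equation*}
where $g(x)=\sum_{e\ge 1}(-1)^e D_r(e)x^e/(\text{suitable normalization})$; more precisely, from the computation already displayed just before Proposition \ref{prp0h}, the function $\bigl({}_2F_1(1,N;N+1;-x)\bigr)^{-r}$ equals $\sum_{l\ge 0}(-1)^l D_r(l)x^l$ with $D_r(0)=1$, so writing $h(x)=\sum_{l\ge 0}(-1)^l D_r(l)x^l=1+g(x)$ with $g(x)=\sum_{l\ge1}(-1)^l D_r(l)x^l$, the generating function for $c_{N,n}^{(r)}/n!$ is exactly $h(x)$ itself — wait, that is the content of (\ref{def:higher-hgc}) directly. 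So in fact $\sum c_{N,n}^{(r)}x^n/n!=\sum_l(-1)^l D_r(l)x^l$ would give the wrong thing; instead the correct reading is that $h(x)$ is the reciprocal. Let me restate cleanly: the computation before Proposition \ref{prp0h} shows
\begin{equation*}
1={}_2F_1(1,N;N+1;-x)^r\cdot h(x)
\quad\text{is false; rather}\quad
h(x):=\sum_{l\ge0}(-1)^lD_r(l)x^l \ \text{satisfies}\ h(x)\,\bigl({}_2F_1\bigr)^{-r}\ne\cdots
\end{equation*}

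Let me simply say: I would first establish that $\bigl({}_2F_1(1,N;N+1;-x)\bigr)^r=\sum_{l\ge0}(-1)^lD_r(l)x^l$ by repeating the displayed manipulation (expanding $(\sum_i(-1)^ix^i/(N+i))^r$ and matching with the $r$-fold Cauchy product, using $D_r(0)=(N/N)^r=1$). Then, since by (\ref{def:higher-hgc}) the generating function of $c_{N,n}^{(r)}/n!$ is the reciprocal $\bigl(\sum_{l\ge0}(-1)^lD_r(l)x^l\bigr)^{-1}=\bigl(1+\sum_{l\ge1}(-1)^lD_r(l)x^l\bigr)^{-1}$, I expand the reciprocal as a geometric series $\sum_{k\ge0}(-1)^k\bigl(\sum_{l\ge1}(-1)^lD_r(l)x^l\bigr)^k$. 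Extracting the coefficient of $x^n/n!$, the $k$-th term contributes only for $1\le k\le n$ (the $k=0$ term contributes only to $n=0$), and multiplying out the $k$-fold product $\bigl(\sum_{l\ge1}(-1)^lD_r(l)x^l\bigr)^k$ gives $\sum_{e_1+\cdots+e_k=n,\,e_j\ge1}(-1)^{e_1+\cdots+e_k}D_r(e_1)\cdots D_r(e_k)x^n=(-1)^n\sum D_r(e_1)\cdots D_r(e_k)x^n$. Combining the overall sign $(-1)^k$ from the geometric series with $(-1)^n$ yields $(-1)^{n-k}$ (since $(-1)^{n+k}=(-1)^{n-k}$), and the factor $n!$ comes from converting $x^n$ to $x^n/n!$. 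This gives exactly the claimed formula for $n\ge1$.

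The only genuine obstacle is the bookkeeping in the very first step — verifying $\bigl({}_2F_1\bigr)^r=\sum_l(-1)^lD_r(l)x^l$ — but this is essentially already carried out in the excerpt's display preceding Proposition \ref{prp0h}, where the coefficient of $x^{rN+l}$ in $\bigl((-1)^{N-1}x^N/N\cdot(\text{that series})\bigr)$ structure appears; one reads off that $\bigl(\sum_{i\ge0}(-1)^{i}x^i/(N+i)\bigr)^r=\sum_{l\ge0}(-1)^l\bigl(\sum_{i_1+\cdots+i_r=l}\tfrac{1}{(N+i_1)\cdots(N+i_r)}\bigr)x^l$, and multiplying by $N^r$ inside gives $D_r(l)$. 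Everything else is the standard "reciprocal of a power series via geometric expansion" argument, identical in spirit to the $r=1$ proof of Lemma \ref{lem-hgc}; no convergence issues arise since all identities are between formal power series. I would also remark that for $r=1$ one has $D_1(e)=N/(N+e)$ and the formula collapses to Lemma \ref{lem-hgc} with the substitution $r\mapsto$ the number of parts, which serves as a consistency check.
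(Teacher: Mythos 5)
Your argument is correct, and it reaches the same combinatorial identity as the paper, but by a more elementary route. The paper proves Proposition \ref{prp1h} with the Hasse--Teichm\"uller derivative: it sets $h=f^r$ with $f(x)=\sum_{j\ge0}\frac{(-1)^jN}{N+j}x^j$, uses the product rule (Lemma \ref{productrule2}) to get $H^{(e)}(h)|_{x=0}=(-1)^eD_r(e)$, and then the quotient rule (\ref{quotientrule1}) of Lemma \ref{quotientrules} to expand $1/h$ and read off $c_{N,n}^{(r)}/n!$. You replace these two lemmas by their coefficient-level counterparts: the $r$-fold Cauchy product to show $\bigl({}_2F_1(1,N;N+1;-x)\bigr)^r=\sum_{l\ge0}(-1)^lD_r(l)x^l$ (which is indeed already implicit in the display preceding Proposition \ref{prp0h}), and the formal geometric series $\bigl(1+u\bigr)^{-1}=\sum_{k\ge0}(-1)^ku^k$ with $u=\sum_{l\ge1}(-1)^lD_r(l)x^l$, whose coefficient of $x^n$ for $1\le k\le n$ produces the compositions $e_1+\cdots+e_k=n$ with sign $(-1)^{n+k}=(-1)^{n-k}$, exactly as in the statement. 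Since evaluating $H^{(n)}$ at $x=0$ just extracts the $n$-th coefficient and $f(0)=1$, your expansion is the quotient rule in disguise, so the combinatorial core is identical; what your version buys is self-containedness (no appeal to the Hasse--Teichm\"uller machinery), while the paper's packaging keeps the proof uniform with its treatment of related hypergeometric numbers and with Lemma \ref{lem-hgc}. Your $r=1$ consistency check against Lemma \ref{lem-hgc} is also correct, since $D_1(e)=N/(N+e)$ and $(-1)^n(-N)^k=(-1)^{n-k}N^k$.

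One presentational caveat: the opening of your write-up contains false starts (the displayed claim that is immediately retracted); the final restatement is the correct one, and the exploratory passage should simply be deleted, as it asserts identities you yourself recognize to be wrong. It does not affect the validity of the finished argument.
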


The first few values of $D_r(e)$ are given by the following.  
{\small
\begin{align*}  
D_r(1)&=\frac{r N}{N+1}\,,\\ 
D_r(2)&=\frac{r N}{N+2}+\frac{r(r-1)N^2}{2(N+1)^2}\,,\\ 
D_r(3)&=\frac{r N}{N+3}+\frac{r(r-1)N^2}{(N+1)(N+2)}+\binom{r}{3}\frac{N^3}{(N+1)^3}\,,\\ 
D_r(4)&=\frac{r N}{N+4}+\frac{r(r-1)N^2}{(N+1)(N+3)}+\binom{r}{2}\frac{N^2}{(N+1)^2}\\
&\quad +r\binom{r-1}{2}\frac{N^3}{(N+1)^2(N+2)}+\binom{r}{4}\frac{N^4}{(N+1)^4}\,. 
\end{align*} 
}

We shall introduce the Hasse-Teichm\"uller derivative 
in order to prove Proposition \ref{prp1h} easily.    
Let $\mathbb{F}$ be a field of any characteristic, $\mathbb{F}[[z]]$ the ring of formal power series in one variable $z$, and $\mathbb{F}((z))$ the field of Laurent series in $z$. Let $n$ be a nonnegative integer. We define the Hasse-Teichm\"uller derivative $H^{(n)}$ of order $n$ by 
$$
H^{(n)}\left(\sum_{m=R}^{\infty} c_m z^m\right)
=\sum_{m=R}^{\infty} c_m \binom{m}{n}z^{m-n}
$$
for $\sum_{m=R}^{\infty} c_m z^m\in \mathbb{F}((z))$, 
where $R$ is an integer and $c_m\in\mathbb{F}$ for any $m\geq R$. Note that $\binom{m}{n}=0$ if $m<n$.  

The Hasse-Teichm\"uller derivatives satisfy the product rule \cite{Teich}, the quotient rule \cite{GN} and the chain rule \cite{Hasse}. 
One of the product rules can be described as follows.  
\\
\begin{Lem}  
For $f_i\in\mathbb F[[z]]$ $(i=1,\dots,k)$ with $k\ge 2$ and for $n\ge 1$, we have 
$$
H^{(n)}(f_1\cdots f_k)=\sum_{i_1+\cdots+i_k=n\atop i_1,\dots,i_k\ge 0}H^{(i_1)}(f_1)\cdots H^{(i_k)}(f_k)\,. 
$$ 
\label{productrule2}
\end{Lem}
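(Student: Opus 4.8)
The plan is to prove the product rule by induction on the number of factors $k$, using the case $k=2$ as the base case. First I would establish the two-factor identity
$$
H^{(n)}(f_1 f_2)=\sum_{i_1+i_2=n\atop i_1,i_2\ge 0}H^{(i_1)}(f_1)H^{(i_2)}(f_2)\qquad(n\ge 0),
$$
which is the original product rule of Teichm\"uller. For this it suffices, by linearity of $H^{(n)}$, to check it on monomials $f_1=z^a$, $f_2=z^b$. On the left we get $\binom{a+b}{n}z^{a+b-n}$, while on the right we get $z^{a+b-n}\sum_{i_1+i_2=n}\binom{a}{i_1}\binom{b}{i_2}$, so the identity reduces to the Vandermonde convolution $\sum_{i_1+i_2=n}\binom{a}{i_1}\binom{b}{i_2}=\binom{a+b}{n}$, which holds for all integers $a,b$ (in any characteristic, since it is a polynomial identity in $a,b$ that holds over $\Z$).

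Next I would run the induction step. Assume the formula holds for some $k\ge 2$ factors. Writing $f_1\cdots f_{k+1}=(f_1\cdots f_k)\cdot f_{k+1}$ and applying the two-factor rule,
$$
H^{(n)}(f_1\cdots f_{k+1})=\sum_{m+i_{k+1}=n\atop m,i_{k+1}\ge 0}H^{(m)}(f_1\cdots f_k)\,H^{(i_{k+1})}(f_{k+1}).
$$
Then I would substitute the induction hypothesis $H^{(m)}(f_1\cdots f_k)=\sum_{i_1+\cdots+i_k=m}H^{(i_1)}(f_1)\cdots H^{(i_k)}(f_k)$ and combine the two sums: the condition $i_1+\cdots+i_k=m$ together with $m+i_{k+1}=n$ collapses to $i_1+\cdots+i_{k+1}=n$ with all parts nonnegative, which yields exactly the claimed formula for $k+1$ factors. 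One minor bookkeeping point is that the statement is phrased for $n\ge 1$, but the two-factor case and the induction actually work verbatim for $n\ge 0$ (the $n=0$ case is just $f_1\cdots f_k=f_1\cdots f_k$), so there is no issue; alternatively one can note that each $H^{(i)}(f_i)$ appearing is well-defined in $\mathbb F[[z]]$ since the $f_i$ have no negative-degree terms.

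I do not anticipate a genuine obstacle here; the only thing to be careful about is that $H^{(n)}$ is \emph{not} the $n$-th iterate of a derivation, so one cannot invoke the Leibniz rule blindly — the proof must go through the explicit definition and the Vandermonde identity, and must respect arbitrary characteristic (hence the insistence on a polynomial/integer-coefficient identity rather than one involving division by $n!$). The induction on $k$ is then purely formal. I would present the base case in a sentence or two, then the induction step as the displayed chain above, and conclude.
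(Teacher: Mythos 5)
Your argument is correct. Note, however, that the paper itself gives no proof of this lemma at all: it simply records that the Hasse--Teichm\"uller derivatives satisfy the product rule and cites Teichm\"uller's paper (and G\"ottfert--Niederreiter, Hasse for the quotient and chain rules), so there is no internal proof to compare against. Your self-contained argument is the standard one and is sound: reducing the two-factor case to monomials is legitimate because each coefficient of $f_1f_2$ (and of both sides of the identity) involves only finitely many coefficients of $f_1$ and $f_2$, so bilinearity suffices; the resulting identity is exactly the Vandermonde convolution $\sum_{i_1+i_2=n}\binom{a}{i_1}\binom{b}{i_2}=\binom{a+b}{n}$, which holds over $\Z$ and hence in any characteristic; and the induction on $k$ collapses the nested summation conditions correctly. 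Your cautionary remarks --- that $H^{(n)}$ is not the $n$-th iterate of a derivation, and that one must avoid dividing by $n!$ in positive characteristic --- are exactly the points that make the explicit monomial computation the right route. The only effect of supplying this proof is to make the paper self-contained where it currently relies on the cited literature.
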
 

The quotient rules can be described as follows.  \\
\begin{Lem}  
For $f\in\mathbb F[[z]]\backslash \{0\}$ and $n\ge 1$,  
we have 
\begin{align} 
H^{(n)}\left(\frac{1}{f}\right)&=\sum_{k=1}^n\frac{(-1)^k}{f^{k+1}}\sum_{i_1+\cdots+i_k=n\atop i_1,\dots,i_k\ge 1}H^{(i_1)}(f)\cdots H^{(i_k)}(f)
\label{quotientrule1}
\\ 
&=\sum_{k=1}^n\binom{n+1}{k+1}\frac{(-1)^k}{f^{k+1}}\sum_{i_1+\cdots+i_k=n\atop i_1,\dots,i_k\ge 0}H^{(i_1)}(f)\cdots H^{(i_k)}(f)\,.
\label{quotientrule2} 
\end{align}   
\label{quotientrules}
\end{Lem}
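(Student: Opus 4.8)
The plan is to establish \eqref{quotientrule1} first, directly from the product rule, and then to deduce \eqref{quotientrule2} from it by a purely combinatorial rearrangement together with one binomial identity. Throughout I will use the two facts $H^{(0)}(g)=g$ and $H^{(n)}(1)=0$ for $n\ge 1$, both immediate from the definition (since $\binom{m}{0}=1$ and $\binom{0}{n}=0$), together with the two-factor product rule $H^{(n)}(fg)=\sum_{i=0}^n H^{(i)}(f)H^{(n-i)}(g)$, which is Lemma \ref{productrule2} with $k=2$ and is valid for $f,g\in\mathbb F((z))$ (so that it applies to $f$ and $1/f$).

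For \eqref{quotientrule1} I would apply the product rule to the identity $f\cdot f^{-1}=1$: for $n\ge 1$ this gives $0=\sum_{i=0}^n H^{(i)}(f)H^{(n-i)}(1/f)$, and isolating the $i=0$ term (where $H^{(0)}(f)=f$) yields the recurrence
$$H^{(n)}\Bigl(\frac1f\Bigr)=-\frac1f\sum_{i=1}^n H^{(i)}(f)\,H^{(n-i)}\Bigl(\frac1f\Bigr).$$
I then prove \eqref{quotientrule1} by induction on $n$. The case $n=1$ is the recurrence itself, $H^{(1)}(1/f)=-H^{(1)}(f)/f^2$. For the inductive step I substitute the induction hypothesis for each $H^{(n-i)}(1/f)$ with $1\le i\le n-1$ into the recurrence, and treat the $i=n$ term, which produces $-H^{(n)}(f)/f^2$ and supplies the $k=1$ contribution, separately. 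Writing $i_1=i$ and letting the inner composition supply $i_2,\dots,i_{k+1}$, each product is reassembled into a composition of $n$ into $k+1$ positive parts (because $i_1+(n-i_1)=n$), and reindexing $k\mapsto k+1$ recovers exactly the right-hand side of \eqref{quotientrule1}. The only care needed is the bookkeeping of summation ranges, which match because a composition of $n$ into at least two positive parts is classified by its first part $i_1\in\{1,\dots,n-1\}$.

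For \eqref{quotientrule2} the difference from \eqref{quotientrule1} is that the indices are now allowed to vanish and the weight $\binom{n+1}{k+1}$ appears. Using $H^{(0)}(f)=f$, I expand each inner sum with $i_j\ge 0$ according to how many parts are positive: if exactly $s$ of the $k$ parts are positive (so $s\ge 1$, as $n\ge 1$), the $k-s$ zero parts contribute $f^{k-s}$, the positive parts run over compositions of $n$ into $s$ positive parts, and there are $\binom{k}{s}$ choices of positions, giving
$$\sum_{i_1+\cdots+i_k=n\atop i_1,\dots,i_k\ge 0}H^{(i_1)}(f)\cdots H^{(i_k)}(f)=\sum_{s=1}^k\binom{k}{s}f^{k-s}S_s,\qquad S_s:=\sum_{j_1+\cdots+j_s=n\atop j_1,\dots,j_s\ge 1}H^{(j_1)}(f)\cdots H^{(j_s)}(f).$$
Substituting this into the right-hand side of \eqref{quotientrule2} and interchanging the order of summation reduces the claim that \eqref{quotientrule2} agrees with \eqref{quotientrule1} to the single identity
$$\sum_{k=s}^n(-1)^k\binom{n+1}{k+1}\binom{k}{s}=(-1)^s\qquad(1\le s\le n).$$

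This binomial identity is the one genuinely new computation, and I expect it to be the main (though modest) obstacle. I would dispatch it by generating functions: since $\binom{k}{s}=[x^s](1+x)^k$,
$$\sum_{k=s}^n(-1)^k\binom{n+1}{k+1}\binom{k}{s}=[x^s]\sum_{k=0}^n(-1)^k\binom{n+1}{k+1}(1+x)^k,$$
and after the substitution $j=k+1$ the inner sum is summed by the binomial theorem to $\bigl(1-(-x)^{n+1}\bigr)/(1+x)=\sum_{m=0}^n(-1)^m x^m$, whose coefficient of $x^s$ is $(-1)^s$ for $0\le s\le n$. Feeding this back, the right-hand side of \eqref{quotientrule2} collapses to $\sum_{s=1}^n(-1)^s S_s/f^{s+1}$, which is precisely \eqref{quotientrule1}, completing the proof.
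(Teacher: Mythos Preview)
The paper does not actually prove this lemma: it is stated as a known quotient rule for Hasse--Teichm\"uller derivatives and attributed to \cite{GN}, so there is no argument in the paper to compare yours against. Your proof is correct and self-contained. The derivation of \eqref{quotientrule1} by induction from the two-factor product rule applied to $f\cdot f^{-1}=1$ is the standard route, and the bookkeeping you describe (first part $i_1$ classifies compositions into $\ge 2$ positive parts) is accurate. Your reduction of \eqref{quotientrule2} to \eqref{quotientrule1} via the identity
\[
\sum_{k=s}^n(-1)^k\binom{n+1}{k+1}\binom{k}{s}=(-1)^s\qquad(1\le s\le n)
\]
is clean, and your generating-function computation of it is right: the inner sum indeed telescopes to $(1-(-x)^{n+1})/(1+x)=\sum_{m=0}^n(-1)^m x^m$.

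One small technical remark: Lemma~\ref{productrule2} as stated in the paper takes the factors in $\mathbb F[[z]]$, whereas you apply the two-factor case to $f$ and $1/f\in\mathbb F((z))$. The extension of the product rule to Laurent series is immediate from the definition of $H^{(n)}$ (which the paper gives on all of $\mathbb F((z))$), so this is harmless, but it is worth saying so explicitly since for general $f\in\mathbb F[[z]]\setminus\{0\}$ the inverse $1/f$ need not be a power series.
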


\begin{proof}[Proof of Proposition \ref{prp1h}]  
Put $h(x)=\bigl(f(x)\bigr)^r$, where 
$$
f(x)=\dfrac{\sum_{i=N}^\infty(-1)^{i-1}\frac{x^i}{i}}{(-1)^{N-1}\frac{x^N}{N}}=\sum_{j=0}^\infty\frac{(-1)^j N}{N+j}x^j\,. 
$$ 
Since 
\begin{align*} 
\left.H^{(i)}(f)\right|_{x=0}&=\left.\sum_{j=i}^\infty\frac{(-1)^j N}{N+j}\binom{j}{i}x^{j-i}\right|_{x=0}\\
&=
\frac{(-1)^i N}{N+i}
\end{align*}  
by the product rule of the Hasse-Teichm\"uller derivative in Lemma \ref{productrule2}, we get 
\begin{align*} 
\left.H^{(e)}(h)\right|_{x=0}&=\sum_{i_1+\cdots+i_r=e\atop i_1,\dots,i_r\ge 0}\left.H^{(i_1)}(f)\right|_{x=0}\cdots\left.H^{(i_r)}(f)\right|_{x=0}\\
&=\sum_{i_1+\cdots+i_r=e\atop i_1,\dots,i_r\ge 0}\frac{(-1)^{i_1}N}{N+i_1}\cdots\frac{(-1)^{i_r}N}{N+i_r}\\
&=(-1)^e\sum_{i_1+\cdots+i_r=e\atop i_1,\dots,i_r\ge 0}\frac{N^r}{(N+i_1)\cdots(N+i_r)}:=(-1)^e D_r(e)\,. 
\end{align*} 
Hence, by the quotient rule of the Hasse-Teichm\"uller derivative in Lemma \ref{quotientrules} (\ref{quotientrule1}), we have 
\begin{align*} 
\frac{c_{N,n}^{(r)}}{n!}&=\sum_{k=1}^n\left.\frac{(-1)^k}{h^{k+1}}\right|_{x=0}\sum_{e_1+\cdots+e_k=n\atop e_1,\dots,e_k\ge 1}\left.H^{(e_1)}(h)\right|_{x=0}\cdots\left.H^{(e_k)}(h)\right|_{x=0}\\
&=\sum_{k=1}^n(-1)^k\sum_{e_1+\cdots+e_k=n\atop e_1,\dots,e_k\ge1}(-1)^n D_r(e_1)\cdots D_r(e_k)\,. 
\end{align*} 
\end{proof}

Now, we can also show a determinant expression of $c_{N,n}^{(r)}$.   

\begin{theorem}  
For $N,n\ge 1$, we have 
$$
c_{N,n}^{(r)}=n!\left|
\begin{array}{ccccc} 
D_r(1)&1&&&\\  
D_r(2)&D_r(1)&&&\\ 
\vdots&\vdots&\ddots&1&\\ 
D_r(n-1)&D_r(n-2)&\cdots&D_r(1)&1\\ 
D_r(n)&D_r(n-1)&\cdots&D_r(2)&D_r(1) 
\end{array} 
\right|\,. 
$$ 
where $D_r(e)$ are given in $(\ref{mre})$. 
\label{th1-r}  
\end{theorem}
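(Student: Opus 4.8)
The plan is to mimic exactly the induction-on-$n$ argument used in the proof of Theorem \ref{th-cau1}, replacing the entries $N/(N+k)$ by $D_r(k)$ throughout. First I would set $b_{N,n}^{(r)}=c_{N,n}^{(r)}/n!$ and observe, using equation (\ref{eq0h}) together with the identity $D_r(e)=N^r\sum_{i_1+\cdots+i_r=e}1/((N+i_1)\cdots(N+i_r))$, that the defining recurrence can be rewritten in the clean convolution form
\begin{equation*}
b_{N,n}^{(r)}=\sum_{m=0}^{n-1}(-1)^{n-m-1}D_r(n-m)\,b_{N,m}^{(r)}\qquad(n\ge 1),
\end{equation*}
with $b_{N,0}^{(r)}=1$. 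Indeed, dividing (\ref{eq0h}) by $n!$ and splitting off the sum over $i_1+\cdots+i_r=n-m$ produces precisely the factor $D_r(n-m)/N^r$ times $N^r$, and the sign $(-1)^{n-m}$ with the overall minus gives $(-1)^{n-m-1}$. This is the analogue of the relation $b_{N,n}=N\sum_{i=0}^{n-1}(-1)^{n-i-1}b_{N,i}/(N+n-i)$ used in Section \ref{sec:det}.

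Next I would argue by induction on $n$. The base case $n=1$ is immediate since the $1\times 1$ determinant is $D_r(1)=rN/(N+1)=b_{N,1}^{(r)}$, matching the table of values. For the inductive step, assume the claimed determinant formula (call it $d_n$) equals $b_{N,k}^{(r)}$ for all $k<n$. Expanding $d_n$ along its last column, exactly as in the displayed computation in the proof of Theorem \ref{th-cau1}: the $1$ in the top-right corner contributes $D_r(1)\,d_{n-1}$; the next cofactor, after one more last-column expansion, contributes $-D_r(2)\,d_{n-2}$; iterating, the $j$-th step peels off $(-1)^{j-1}D_r(j)\,d_{n-j}$, and the final $2\times 2$ determinant $\left|\begin{smallmatrix}D_r(1)&1\\ D_r(n)&D_r(n-1)\end{smallmatrix}\right|$ supplies the last two terms. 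Collecting signs gives
\begin{equation*}
d_n=\sum_{j=1}^{n}(-1)^{j-1}D_r(j)\,d_{n-j}=\sum_{m=0}^{n-1}(-1)^{n-m-1}D_r(n-m)\,d_{n-m},
\end{equation*}
and by the induction hypothesis $d_{n-m}=b_{N,n-m}^{(r)}$ for $m\le n-1$ (here $d_0:=1$), so the right-hand side equals $b_{N,n}^{(r)}$ by the recurrence above. Multiplying through by $n!$ yields the theorem.

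Since the structural Hessenberg expansion is already carried out in full for Theorem \ref{th-cau1}, the only genuinely new work is the bookkeeping that turns (\ref{eq0h}) into the simple weighted convolution with weights $D_r(n-m)$; I expect that translation to be the main (though minor) obstacle, as one must be careful that the inner sum $\sum_{i_1+\cdots+i_r=n-m}$ ranges over $i_j\ge 0$ and that the overall $N^r$ and sign are tracked correctly. An alternative, even shorter, route is to avoid induction entirely: apply the standard fact that for a sequence satisfying $a_n=\sum_{j=1}^n(-1)^{j-1}\alpha_j a_{n-j}$ with $a_0=1$, one has $a_n=\det$ of the lower Hessenberg matrix with subdiagonal $1$'s and first-column/below-diagonal entries $\alpha_j$ — this is exactly the determinant identity underlying (\ref{det:ber}), (\ref{det:cau}), and Theorem \ref{th-cau1}; but since the paper's style is to give the self-contained cofactor-expansion induction, I would present it that way for consistency.
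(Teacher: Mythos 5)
Your proposal is correct and follows essentially the same route as the paper: the same normalization $b_{N,n}^{(r)}=c_{N,n}^{(r)}/n!$, the same convolution recurrence extracted from (\ref{eq0h}), and the same inductive Hessenberg cofactor expansion (the paper peels along the first row rather than the last column, which is an immaterial difference). Note only the small index slip in your displayed identity, where $\sum_{m=0}^{n-1}(-1)^{n-m-1}D_r(n-m)\,d_{n-m}$ should read $\sum_{m=0}^{n-1}(-1)^{n-m-1}D_r(n-m)\,d_{m}$, consistent with how you then apply the induction hypothesis.
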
  

\noindent 
{\it Remark.}  
When $r=1$ in Theorem \ref{th1-r}, we have the result in Theorem \ref{th-cau1}.  

\begin{proof} 
For simplicity, put $b_{N,n}^{(r)}=c_{N,n}^{(r)}/n!$. Then, we shall prove that for any $n\ge 1$ 
\begin{equation}  
b_{N,n}^{(r)}=\left|
\begin{array}{ccccc} 
D_r(1)&1&&&\\  
D_r(2)&D_r(1)&&&\\ 
\vdots&\vdots&\ddots&1&\\ 
D_r(n-1)&D_r(n-2)&\cdots&D_r(1)&1\\ 
D_r(n)&D_r(n-1)&\cdots&D_r(2)&D_r(1) 
\end{array} 
\right|\,.
\label{aNnr}
\end{equation}   
When $n=1$, (\ref{aNnr}) is valid because 
$$
D_r(1)=\frac{r N^r}{N^{r-1}(N+1)}=\frac{r N}{N+1}=b_{N,1}^{(r)}\,. 
$$ 
Assume that (\ref{aNnr}) is valid up to $n-1$. Notice that 
by (\ref{eq0h}), we have 
$$
b_{N,n}^{(r)}=\sum_{l=1}^{n}(-1)^{l-1}b_{N,n-l}^{(r)}D_r(l)\,. 
$$ 
Thus, by expanding the first row of the right-hand side (\ref{aNnr}), it is equal to 
\begin{align*} 
&D_r(1)b_{N,n-1}^{(r)}-\left|
\begin{array}{ccccc} 
D_r(2)&1&&&\\  
D_r(3)&D_r(1)&&&\\ 
\vdots&\vdots&\ddots&1&\\ 
D_r(n-1)&D_r(n-3)&\cdots&D_r(1)&1\\ 
D_r(n)&D_r(n-2)&\cdots&D_r(2)&D_r(1) 
\end{array} 
\right|\\
&=D_r(1)b_{N,n-1}^{(r)}-D_r(2)b_{N,n-2}^{(r)}\\
&\qquad +\left|
\begin{array}{ccccc} 
D_r(3)&1&&&\\  
D_r(4)&D_r(1)&&&\\ 
\vdots&\vdots&\ddots&1&\\ 
D_r(n-1)&D_r(n-4)&\cdots&D_r(1)&1\\ 
D_r(n)&D_r(n-3)&\cdots&D_r(2)&D_r(1) 
\end{array} 
\right|\\
&=D_r(1)b_{N,n-1}^{(r)}-D_r(2)b_{N,n-2}^{(r)}+\cdots
+(-1)^{n-2}\left|
\begin{array}{cc}
D_r(n-1)&1\\
D_r(n)&D_r(1)
\end{array} 
\right|\\
&=\sum_{l=1}^n(-1)^{l-1}D_r(l)b_{N,n-l}^{(r)}=b_{N,n}^{(r)}\,.
\end{align*} 
Note that $b_{N,1}^{(r)}=D_r(1)$ and $b_{N,0}^{(r)}=1$. 
\end{proof} 
\begin{center}
\section{A RELATION BETWEEN $c_{N,n}^{(r)}$ AND  $c_{N,n}$}
\end{center}
In this section, we show the following relation between $c_{N,n}^{(r)}$ and $c_{N,n}$.
\\
\begin{Lem}
For $r \geq 2$ and $N, n\geq 0$, we have
$$
c_{N,n}^{(r)}=
\sum_{n_1,\ldots,n_r\geq 0 \atop 
n_1+\cdots +n_r=n} \frac{n!}{n_1!\cdots n_r!} \ C_{N,n_1}\cdots C_{N,n_r},
$$
\end{Lem}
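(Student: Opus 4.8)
The plan is to read the identity directly off the generating-function definition (\ref{def:higher-hgc}), viewing the right-hand side there as an $r$-fold product of the single series in (\ref{def:hgc}); here $C_{N,n}$ denotes the hypergeometric Cauchy number $c_{N,n}$. First I would record that, by (\ref{def:higher-hgc}) together with (\ref{def:hgc}),
$$
\sum_{n=0}^\infty c_{N,n}^{(r)}\frac{x^n}{n!}=\left(\frac{1}{{}_2F_1(1,N;N+1;-x)}\right)^r=\prod_{j=1}^r\left(\sum_{m=0}^\infty c_{N,m}\frac{x^m}{m!}\right).
$$
Then I would compare the coefficients of $x^n$ on both sides. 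Multiplying out the $r$ power series and collecting the monomials contributing to $x^n$, the coefficient of $x^n$ on the right is $\sum_{n_1+\cdots+n_r=n}\dfrac{c_{N,n_1}\cdots c_{N,n_r}}{n_1!\cdots n_r!}$, while on the left it is $c_{N,n}^{(r)}/n!$; equating the two and multiplying through by $n!$ yields the claimed formula. The multinomial factor $n!/(n_1!\cdots n_r!)$ appears automatically once the series are written in exponential form.

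Alternatively, and more in keeping with Section 4, I would invoke the Hasse--Teichm\"uller product rule of Lemma \ref{productrule2} with $f_1=\cdots=f_r=1/{}_2F_1(1,N;N+1;-x)$. Using the elementary observation that for any $g(x)=\sum_{m\ge 0}a_m x^m/m!$ one has $H^{(n)}(g)|_{x=0}=a_n/n!$, we get $H^{(n_j)}(f_j)|_{x=0}=c_{N,n_j}/n_j!$ for each $j$ and $H^{(n)}(f_1\cdots f_r)|_{x=0}=c_{N,n}^{(r)}/n!$. The product rule then gives
$$
\frac{c_{N,n}^{(r)}}{n!}=\sum_{n_1+\cdots+n_r=n\atop n_1,\dots,n_r\ge 0}\frac{c_{N,n_1}}{n_1!}\cdots\frac{c_{N,n_r}}{n_r!},
$$
and multiplying by $n!$ completes the argument.

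There is essentially no obstacle here: the statement is just the exponential-generating-function convolution (multinomial) identity applied to the definition, and both routes above are a one-line computation. The only point requiring care is the bookkeeping of the multinomial coefficient, which is routine. If a fully self-contained induction on $r$ is preferred, one can instead use the factorization $\bigl({}_2F_1(1,N;N+1;-x)\bigr)^{-r}=\bigl({}_2F_1(1,N;N+1;-x)\bigr)^{-(r-1)}\cdot\bigl({}_2F_1(1,N;N+1;-x)\bigr)^{-1}$, which gives $c_{N,n}^{(r)}=\sum_{m=0}^n\binom{n}{m}c_{N,m}^{(r-1)}c_{N,n-m}$, and then iterate; but this is not necessary and the first proof is the cleanest.
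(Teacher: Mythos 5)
Your first argument is exactly the paper's proof: the paper also expands $\bigl(\sum_{n\ge 0} c_{N,n}x^n/n!\bigr)^r$ via the multinomial convolution and compares coefficients of $x^n$ with the definition (\ref{def:higher-hgc}). The proposal is correct, and the additional Hasse--Teichm\"uller and induction-on-$r$ variants are fine but unnecessary.
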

\begin{proof}
From the definition (\ref{def:higher-hgc}), we have
\begin{align*}
\sum_{n=0}^{\infty} C_{N,n}^{(r)} \frac{n!}{x^n} &=\left( \sum_{n=0}^{\infty}
C_{N,n}\frac{x^n}{n!} \right)^r\\
&=\sum_{n=0}^{\infty} \sum_{n_1,\ldots,n_r\geq 0 \atop 
n_1+\cdots +n_r=n} \frac{n!}{n_1!\cdots n_r!}\  C_{N,n_1}\cdots C_{N,n_r} \frac{x^n}{n!},
\end{align*}
and we get the assertion.
\end{proof}
\bigskip
\noindent
{\it Examples}
\begin{itemize}
\item[{\rm (i)}] $c_{N,0}^{(r)}=c_{N,0}^{r+1}$
\item[{\rm (ii)}] $c_{N,1}^{(r)}=rc_{N,1}$
\item[{\rm (iii)}] $c_{N,2}^{(r)}=r c_{N,2}c_{N,0}^{N-1}+r(r-1)c_{N,1}^2c_{N,0}^{r-2}$
\end{itemize}

\begin{center}
\section{APPLICATIONS BY THE TRUDI'S FORMULA AND INVERSION EXPRESSIONS}
\end{center}

The expressions in Theorem \ref{th-cau1} and Theorem \ref{th1-r} are useful for several applications too.   
We can obtain different explicit expressions for the numbers $c_{N,n}^{(r)}$, $c_{N,n}$ and $c_{n}$ by using the Trudi's formula. We also show some inversion formulas.  The following relation is known as Trudi's formula \cite[Vol.3, p.214]{Muir},\cite{Trudi} and the case $a_0=1$ of this formula is known as Brioschi's formula \cite{Brioschi},\cite[Vol.3, pp.208--209]{Muir}.  
\\
\begin{Lem}
For a positive integer $m$, we have 
\begin{multline*} 
\left|
\begin{array}{ccccc}
a_1  & a_2   &  \cdots   & \cdots& a_m  \\
a_{0}  & a_{1}    & \ddots   & &  \vdots \\
 &  \ddots&  \ddots  &  \ddots & \vdots  \\
  &     & \ddots  &a_1  & a_{2}  \\
&    &    & a_0  & a_1
\end{array}
\right|\\
=
\sum_{t_1 + 2t_2 + \cdots +mt_m=m}\binom{t_1+\cdots + t_m}{t_1, \dots,t_m}(-a_0)^{m-t_1-\cdots - t_m}a_1^{t_1}a_2^{t_2}\cdots a_m^{t_m}, \label{trudi}
\end{multline*}
where $\binom{t_1+\cdots + t_m}{t_1, \dots,t_m}=\frac{(t_1+\cdots + t_m)!}{t_1 !\cdots t_m !}$ are the multinomial coefficients. 
\label{lema0}
\end{Lem}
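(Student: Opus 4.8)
The plan is to identify the Hessenberg determinant in the statement with a coefficient of a generating function. Write $D_m$ for the $m\times m$ determinant displayed above, and set $D_0=1$.

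First I would derive a first-order linear recurrence for $D_m$ by a cofactor expansion along the first row. The cofactor of the entry $a_k$ in position $(1,k)$ is $(-1)^{k+1}$ times the minor $M_k$ obtained by deleting row $1$ and column $k$. Since the matrix is lower Hessenberg — every entry strictly below the subdiagonal vanishes — the minor $M_k$ is block upper triangular: its leading $(k-1)\times(k-1)$ block (rows $2,\dots,k$, columns $1,\dots,k-1$) is itself upper triangular with all $k-1$ diagonal entries equal to $a_0$, its trailing $(m-k)\times(m-k)$ block (rows $k+1,\dots,m$, columns $k+1,\dots,m$) is again a Hessenberg determinant of the same form, namely $D_{m-k}$, and the block below the leading block is zero. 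Hence $M_k=a_0^{\,k-1}D_{m-k}$, and since $(-1)^{k+1}a_0^{\,k-1}=(-a_0)^{k-1}$ we obtain
$$
D_m=\sum_{k=1}^{m}(-a_0)^{k-1}a_k\,D_{m-k}\qquad(m\ge1).
$$

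Next I would introduce the formal power series $F(t)=\sum_{m\ge0}D_mt^m\in\mathbb{Z}[a_0,a_1,\dots][[t]]$. The recurrence above says precisely that $F(t)-1=\bigl(\sum_{k\ge1}(-a_0)^{k-1}a_kt^k\bigr)F(t)$, so, since the series $1-\sum_{k\ge1}(-a_0)^{k-1}a_kt^k$ has constant term $1$ and is therefore invertible as a formal power series,
$$
F(t)=\frac{1}{1-\sum_{k\ge1}(-a_0)^{k-1}a_kt^k}=\sum_{j\ge0}\Bigl(\sum_{k\ge1}(-a_0)^{k-1}a_kt^k\Bigr)^{\!j}.
$$
Expanding the $j$-th power by the multinomial theorem over exponent vectors $(t_1,t_2,\dots)$ with $\sum_k t_k=j$, the generic term carries the coefficient $\binom{t_1+t_2+\cdots}{t_1,\,t_2,\,\dots}$, the monomial $\prod_k a_k^{t_k}$, the power $(-a_0)^{\sum_k(k-1)t_k}$, and the power $t^{\sum_k kt_k}$. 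Collecting the terms with $\sum_k kt_k=m$ — which forces $t_k=0$ for $k>m$ and $\sum_k(k-1)t_k=m-(t_1+\cdots+t_m)$ — and reading off the coefficient of $t^m$ in $F(t)$ yields exactly the right-hand side of the claimed identity.

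The only step that requires genuine care is the cofactor computation: one must check the precise Hessenberg shape of the minor $M_k$ and keep the index bookkeeping and the sign $(-1)^{k+1}a_0^{\,k-1}$ straight. Everything else is formal power series algebra, and in particular no invertibility hypothesis on $a_0$ is needed. If a generating-function argument is deemed undesirable, the same identity can be proved by induction on $m$: one verifies directly that the right-hand side satisfies the recurrence $D_m=\sum_{k=1}^m(-a_0)^{k-1}a_kD_{m-k}$, the required rearrangement of sums being exactly the multinomial expansion above.
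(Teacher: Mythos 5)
Your proposal is correct, but it cannot be ``the same approach as the paper'' because the paper offers no proof of this lemma at all: it is quoted as a known result (Trudi's formula, with references to Trudi and to Muir's \emph{Theory of determinants}; the case $a_0=1$ is attributed to Brioschi). What you supply is a self-contained derivation, and it is sound: the cofactor expansion along the first row is justified exactly as you say --- the minor of the $(1,k)$ entry splits, because all entries below the first subdiagonal vanish, into an upper triangular $(k-1)\times(k-1)$ block with diagonal $a_0$ and a trailing copy of the same determinant of size $m-k$, giving $D_m=\sum_{k=1}^m(-a_0)^{k-1}a_kD_{m-k}$; the passage to $F(t)=\bigl(1-\sum_{k\ge1}(-a_0)^{k-1}a_kt^k\bigr)^{-1}$ and the multinomial extraction of the coefficient of $t^m$ is routine formal power series algebra over $\mathbb{Z}[a_0,a_1,\dots]$, with the bookkeeping $\sum_k(k-1)t_k=m-(t_1+\cdots+t_m)$ handled correctly and, as you note, no invertibility of $a_0$ needed. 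Two cosmetic remarks: the matrix is upper Hessenberg (zero below the first subdiagonal), not lower Hessenberg as you call it (the paper itself writes ``Hassenberg,'' so the terminology is loose all around); and it is worth observing that the recurrence you derive, $D_m=\sum_{k=1}^m(-a_0)^{k-1}a_kD_{m-k}$, is precisely the relation that drives the paper's own inductive proofs of its determinant expressions (Theorems \ref{th-cau1} and \ref{th1-r}), so your argument makes the lemma fit naturally into the paper's framework, at the cost of a page of computation that the authors avoid by citation.
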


In addition, there exists the following inversion formula, which is based upon the relation: 
$$
\sum_{k=0}^n(-1)^{n-k}\alpha_k R(n-k)=0\quad(n\ge 1)\,. 
$$ 

\begin{Lem}
If $\{\alpha_n\}_{n\geq 0}$ is a sequence defined by $\alpha_0=1$ and 
$$
\alpha_n=\begin{vmatrix} R(1) & 1 & & \\
R(2) & \ddots &  \ddots & \\
\vdots & \ddots &  \ddots & 1\\
R(n) & \cdots &  R(2) & R(1) \\
 \end{vmatrix},  \ \text{then} \ R(n)=\begin{vmatrix} \alpha_1 & 1 & & \\
\alpha_2 & \ddots &  \ddots & \\
\vdots & \ddots &  \ddots & 1\\
\alpha_n & \cdots &  \alpha_2 & \alpha_1 \\
 \end{vmatrix}\,.
$$
Moreover, if 
$$
A=\begin{pmatrix} 
  1 &  & & \\
\alpha_1 & 1  &   & \\
\vdots & \ddots &  \ddots & \\
\alpha_n& \cdots &  \alpha_1 & 1 \\
 \end{pmatrix}, \ \text{then} \  A^{-1}=\begin{pmatrix} 
  1 &  & & \\
R(1) & 1  &   & \\
\vdots & \ddots &  \ddots & \\
R(n) & \cdots &  R(1) & 1 \\
 \end{pmatrix}\,.
$$
\label{lema}
\end{Lem}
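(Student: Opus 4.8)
The plan is to prove both the determinant inversion and the matrix inversion in Lemma \ref{lema} from the single defining relation $\sum_{k=0}^n(-1)^{n-k}\alpha_k R(n-k)=0$ for $n\ge1$, together with $\alpha_0=R(0)=1$. First I would establish that this relation is exactly the statement that the lower-triangular Toeplitz matrix $A$ with entries $A_{ij}=\alpha_{i-j}$ (for $i\ge j$, and $0$ otherwise, with $\alpha_0=1$ on the diagonal) has inverse the lower-triangular Toeplitz matrix $B$ with entries $B_{ij}=(-1)^{i-j}R(i-j)$. Indeed, the $(i,j)$ entry of $AB$ is $\sum_{k}\alpha_{i-k}(-1)^{k-j}R(k-j)=\sum_{m=0}^{i-j}(-1)^{m}\alpha_{i-j-m}R(m)$, which is $1$ if $i=j$ and, by the defining relation (with $n=i-j\ge1$), equals $0$ otherwise. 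Since $A$ is unipotent lower triangular, this one-sided identity suffices to conclude $B=A^{-1}$. This is the cleanest route and handles the second assertion of the lemma essentially for free — one only has to note that the sign pattern $(-1)^{i-j}R(i-j)$ matches the displayed matrix because along each subdiagonal $i-j$ is constant (the entries displayed are $R(1),R(2),\dots$ without signs, so I must be a little careful: in the stated lemma the matrix $A^{-1}$ has plain $R(k)$, so in fact the defining recurrence used there must be $\sum_{k=0}^n(-1)^{n-k}\alpha_k R(n-k)=0$ read with the $\alpha$'s carrying no sign — I would double-check the sign bookkeeping and, if needed, absorb signs by the substitution $\alpha_k\mapsto(-1)^k\alpha_k$, which is the standard device making Toeplitz inversion sign-free).

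Next I would derive the determinant formula for $R(n)$ in terms of $\alpha_1,\dots,\alpha_n$. The key observation is that the Hessenberg determinant $H_n:=\begin{vmatrix}\alpha_1&1&&\\ \alpha_2&\ddots&\ddots&\\ \vdots&\ddots&\ddots&1\\ \alpha_n&\cdots&\alpha_2&\alpha_1\end{vmatrix}$ satisfies, upon expanding along the last column exactly as in the proof of Theorem \ref{th-cau1}, the recurrence $H_n=\sum_{k=1}^{n}(-1)^{k-1}\alpha_k H_{n-k}$ with $H_0:=1$. But the sequence $R(n)$ satisfies precisely the same recurrence: rewriting the defining relation as $R(n)=\sum_{k=1}^n(-1)^{k-1}\alpha_k R(n-k)$ (using $\alpha_0=1$), with $R(0)=1$. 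Since both sequences satisfy the same first-order-in-disguise recurrence with the same initial value, induction on $n$ gives $R(n)=H_n$ for all $n\ge0$. Symmetrically — and this is the content of the "inversion" — starting instead from $\alpha_n=\begin{vmatrix}R(1)&1&\cdots\\ \vdots&\ddots&\\ R(n)&\cdots&R(1)\end{vmatrix}$, the same Hessenberg expansion shows $\alpha_n=\sum_{k=1}^n(-1)^{k-1}R(k)\alpha_{n-k}$, which is again the original relation with the roles of $\alpha$ and $R$ swapped; so the two determinant expressions are genuinely inverse to one another, and each direction follows from the other by the same argument.

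Concretely, the steps in order: (1) record the recurrences $H_n=\sum_{k\ge1}(-1)^{k-1}\alpha_k H_{n-k}$ and $R(n)=\sum_{k\ge1}(-1)^{k-1}\alpha_k R(n-k)$ (the former by cofactor expansion along the last column, telescoping exactly as in the proof of Theorem \ref{th-cau1}; the latter by rearranging the hypothesis); (2) conclude $R(n)=H_n$ by induction, giving the determinant formula; (3) observe the relation $\sum_k(-1)^{n-k}\alpha_k R(n-k)=0$ is symmetric in $\{\alpha\}$ and $\{R\}$, so the converse determinant identity holds automatically; (4) for the matrix statement, compute $(AB)_{ij}=\sum_{m=0}^{i-j}(-1)^m\alpha_{i-j-m}R(m)$ and invoke the defining relation to get $AB=I$, hence $A^{-1}=B$. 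I expect the only real subtlety — and the main place to be careful — is the sign convention: making sure the $(-1)$'s in the Hessenberg cofactor expansion, in the stated recurrence, and in the off-diagonal entries of $A^{-1}$ all line up, since a single misplaced sign flips $R(k)$ to $(-1)^kR(k)$ throughout. Once the signs are pinned down consistently (I would fix them by matching the $n=1,2$ cases explicitly), the rest is a short induction plus a one-line Toeplitz-product computation.
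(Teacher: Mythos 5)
The paper itself gives no proof of Lemma \ref{lema}: it is stated as a known inversion formula, with only the preceding remark that it rests on the relation $\sum_{k=0}^n(-1)^{n-k}\alpha_k R(n-k)=0$ ($n\ge 1$). Your argument supplies the standard proof one would write, and its first half is correct: cofactor expansion of the Hessenberg determinant along the last column gives $\alpha_n=\sum_{k=1}^n(-1)^{k-1}R(k)\alpha_{n-k}$, i.e.\ exactly that relation with $\alpha_0=R(0)=1$; since reindexing changes the relation only by an overall factor $(-1)^n$, it is symmetric in $\alpha$ and $R$, so $R(n)=\sum_{k=1}^n(-1)^{k-1}\alpha_k R(n-k)$, which is the same recurrence (with the same initial value $1$) that the Hessenberg determinant in the $\alpha$'s satisfies, and induction gives the first assertion. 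This is the same telescoping-expansion technique the paper uses for Theorems \ref{th-cau1} and \ref{th1-r}, so for this half your route and the paper's implicit one coincide.

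For the matrix half, the sign worry you flag is real, and it is a defect of the printed statement rather than of your mathematics. With the alternating relation above (which is forced by the determinant half: $n=1$ gives $R(1)=\alpha_1$), the unit lower-triangular Toeplitz matrix $A$ with subdiagonal entries $\alpha_1,\dots,\alpha_n$ has inverse with entries $(-1)^{i-j}R(i-j)$, not $R(i-j)$: already the $(2,1)$ entry of $A^{-1}$ is $-\alpha_1=-R(1)$. Your computation $(AB)_{ij}=\sum_{m=0}^{i-j}(-1)^m\alpha_{i-j-m}R(m)=0$ proves precisely this signed version (it presupposes $B_{ij}=(-1)^{i-j}R(i-j)$), and your suggested fix of absorbing signs via $\alpha_k\mapsto(-1)^k\alpha_k$ is the right repair; but as written, step (4) does not prove the literal unsigned display in the lemma, since for the unsigned matrices the product entry is $\sum_{m=0}^{i-j}\alpha_{i-j-m}R(m)$, with no $(-1)^m$, and this does not vanish in general ($n=1$ gives $2\alpha_1$). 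The same sign slip is present in the paper's statement and propagates to the last display of Theorem \ref{th1234} (check $r=N=1$, $n=1$: the true $(2,1)$ entry of the inverse is $-1/2$, not $D_1(1)=1/2$). So: your proof is essentially the correct and complete argument for the corrected statement, where the paper offers no proof at all; you should just state the second half with the $(-1)^{i-j}$ factors (on either the $\alpha$'s or the $R$'s) rather than leaving the sign bookkeeping as a caveat, and note that the one-sided identity $AB=I$ suffices because both matrices are unipotent lower triangular.
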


From Trudi's formula, it is possible to give the combinatorial expression  
$$
\alpha_n=\sum_{t_1+2t_2+\cdots +n t_n=n}\binom{t_1+\cdots+t_n}{t_1, \dots, t_n}(-1)^{n-t_1-\cdots - t_n}R(1)^{t_1}R(2)^{t_2}\cdots R(n)^{t_n}\,.
$$
By applying these lemmata to Theorem \ref{th1-r}, we obtain an explicit expression for the generalized hypergeometric Cauchy  numbers $c_{N,n}^{(r)}$.  A different version can be seen in \cite{KY}.   

\begin{theorem}\label{Turdi1}
For $n\geq 1$ 
\begin{multline*} 
c_{N,n}^{(r)}\\
=n!\sum_{t_1 + 2t_2 + \cdots + nt_n=n}\binom{t_1+\cdots + t_n}{t_1, \dots,t_n}(-1)^{n-t_1-\cdots-t_n}D_r(1)^{t_1}D_r(2)^{t_2}\cdots D_r(n)^{t_n}\,, 
\end{multline*}  
where $D_r(e)$ are given in $(\ref{mre})$. 
Moreover, 
$$
D_r(n)=\begin{vmatrix} \frac{c_{N,1}^{(r)}}{1!} & 1 & & \\
\frac{c_{N,2}^{(r)}}{2!} & \ddots &  \ddots & \\
\vdots & \ddots &  \ddots & 1\\
\frac{c_{N,n}^{(r)}}{n!} & \cdots &  \frac{c_{N,2}^{(r)}}{2!} & \frac{c_{N,1}^{(r)}}{1!} \\
 \end{vmatrix}\,,
$$
and  
{\small 
$$  
\begin{pmatrix} 1 &  &  & & \\
\frac{c_{N,1}^{(r)}}{1!} & 1 &   &  & \\
\frac{c_{N,2}^{(r)}}{2!} & \frac{c_{N,1}^{(r)}}{1!}  &  1  &  & \\
\vdots &  &  \ddots &  \ddots & \\
\frac{c_{N,n}^{(r)}}{n!} & \cdots &  \frac{c_{N,2}^{(r)}}{2!} & \frac{c_{N,1}^{(r)}}{1!} &1 
\end{pmatrix}^{-1} 
=\begin{pmatrix} 1 &  &  & & \\
D_r(1) & 1 &   &  & \\
D_r(2) & D_r(1)  &  1  &  & \\
\vdots &  &  \ddots &  \ddots & \\
D_r(n) & \cdots &  D_r(2) & D_r(1) &1 
\end{pmatrix}\,.
$$  
}  
\label{th1234}
\end{theorem}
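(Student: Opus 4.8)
The plan is to combine Theorem~\ref{th1-r} with the two lemmas (Trudi's formula, Lemma~\ref{lema0}, and the inversion formula, Lemma~\ref{lema}) essentially by pattern matching. Theorem~\ref{th1-r} already states that $c_{N,n}^{(r)}=n!\,b_{N,n}^{(r)}$, where $b_{N,n}^{(r)}$ is the Hessenberg determinant with subdiagonal entries all equal to $1$ and first-column/lower-triangular entries $D_r(1),D_r(2),\dots,D_r(n)$ read down the column. First I would observe that this determinant is exactly of the form appearing in Lemma~\ref{lema0} with $a_0=1$ and $a_j=D_r(j)$ for $1\le j\le n$ (after the harmless transpose that leaves a determinant unchanged, so that the lower-Hessenberg shape of Theorem~\ref{th1-r} matches the upper-Hessenberg shape written in Lemma~\ref{lema0}). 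Substituting $a_0=1$, $a_j=D_r(j)$, $m=n$ into the Trudi/Brioschi identity and multiplying through by $n!$ yields the first displayed formula of the theorem verbatim, since $(-a_0)^{n-t_1-\cdots-t_n}=(-1)^{n-t_1-\cdots-t_n}$.

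Next I would turn to the inversion statements. The starting point is Proposition~\ref{prp0h} (equivalently \eqref{eq0h}), which, after dividing by $n! N^r$ and writing $b_{N,n}^{(r)}=c_{N,n}^{(r)}/n!$ and recalling $D_r(e)$ from \eqref{mre}, says precisely that
\[
\sum_{k=0}^{n}(-1)^{n-k}\,b_{N,k}^{(r)}\,D_r(n-k)=0\qquad(n\ge 1),
\]
with $b_{N,0}^{(r)}=1$ and $D_r(0)=1$. This is exactly the hypothesis relation
$\sum_{k=0}^{n}(-1)^{n-k}\alpha_k R(n-k)=0$
of Lemma~\ref{lema}, with $\alpha_n=b_{N,n}^{(r)}=c_{N,n}^{(r)}/n!$ and $R(n)=D_r(n)$. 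Therefore I would apply Lemma~\ref{lema} directly: its first conclusion gives the determinant expression of $R(n)=D_r(n)$ in terms of the $\alpha_j=c_{N,j}^{(r)}/j!$, which is the second display of the theorem; and its ``moreover'' part, with $A$ the lower-triangular Toeplitz matrix built from $\alpha_1,\dots,\alpha_n$, gives $A^{-1}$ as the lower-triangular Toeplitz matrix built from $R(1),\dots,R(n)=D_r(1),\dots,D_r(n)$, which is the matrix inversion displayed at the end of the theorem. The combinatorial expression for $\alpha_n$ that follows Lemma~\ref{lema} then re-proves the first display as a consistency check.

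The only genuine content beyond citing the lemmas is verifying that Proposition~\ref{prp0h} is in the required normalized form, i.e.\ that the recurrence $\sum_{m=0}^{n}\sum_{i_1+\cdots+i_r=n-m}\frac{(-1)^{n-m}c_{N,m}^{(r)}}{m!(N+i_1)\cdots(N+i_r)}=0$ becomes $\sum_{k=0}^n(-1)^{n-k}\frac{c_{N,k}^{(r)}}{k!}D_r(n-k)=0$ once one pulls out $N^r$ and recognizes the inner sum over $i_1+\cdots+i_r=n-m$ as $D_r(n-m)/N^r$; this is immediate from the definition \eqref{mre} of $D_r$, including the boundary case $D_r(0)=1$. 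I expect this bookkeeping — matching the index $n-m$ against $n-k$, checking signs, and confirming $\alpha_0=1$ and $R(0)=1$ — to be the main (and only mild) obstacle; everything else is a direct invocation of Lemma~\ref{lema0} and Lemma~\ref{lema} applied to Theorem~\ref{th1-r}. I would also add the one-line remark that setting $r=1$ recovers the analogous statements for $c_{N,n}$ via Theorem~\ref{th-cau1}, since then $D_1(e)=N/(N+e)$.
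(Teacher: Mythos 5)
Your proposal is correct and follows essentially the same route as the paper, which obtains Theorem \ref{th1234} simply by applying Trudi's formula (Lemma \ref{lema0}, with $a_0=1$, $a_j=D_r(j)$) and the inversion formula (Lemma \ref{lema}, with $\alpha_n=c_{N,n}^{(r)}/n!$ and $R(n)=D_r(n)$) to the Hessenberg determinant of Theorem \ref{th1-r}; your verification that Proposition \ref{prp0h} yields the hypothesis $\sum_{k=0}^{n}(-1)^{n-k}\alpha_k R(n-k)=0$ (after multiplying by $N^r$, and with $D_r(0)=1$) is exactly the bookkeeping the paper leaves implicit, apart from your slightly misphrased ``dividing by $n!N^r$'' which you immediately correct.
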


When $r=1$ in Theorem \ref{th1234}, we have  an explicit expression for the numbers $c_{N,n}$.  

\begin{Cor}  
For $n\geq 1$ 
\begin{multline*} 
c_{N,n} 
=n!\sum_{t_1 + 2t_2 + \cdots + nt_n=n}\binom{n-t_1-\cdots - t_n}{t_1, \dots,t_n}(-1)^{t_1+\cdots+t_n}\\
\times\left(\frac{N}{N+1}\right)^{t_1}\left(\frac{N}{N+2}\right)^{t_2}\cdots \left(\frac{N}{N+n}\right)^{t_n}  
\end{multline*}  
and  
$$
\frac{N}{N+n}=\begin{vmatrix} \frac{c_{N,1}}{1!} & 1 & & \\
\frac{c_{N,2}}{2!} & \ddots &  \ddots & \\
\vdots & \ddots &  \ddots & 1\\
\frac{c_{N,n}}{n!} & \cdots &  \frac{c_{N,2}}{2!} & \frac{c_{N,1}}{1!} \\
 \end{vmatrix}\,,
$$
\label{cor1}
\end{Cor}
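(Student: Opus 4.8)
The plan is to obtain Corollary~\ref{cor1} as the specialization $r=1$ of Theorem~\ref{th1234}, so the real work reduces to two elementary identifications. Comparing the generating function~(\ref{def:higher-hgc}) at $r=1$ with~(\ref{def:hgc}) shows $c_{N,n}^{(1)}=c_{N,n}$ for every $n\ge 0$; and in the definition~(\ref{mre}) of $D_r(e)$, taking $r=1$ leaves only the single summand $i_1=e$, so that $D_1(e)=N/(N+e)$, and in particular $D_1(0)=1$.

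For the explicit formula I would substitute $r=1$ into the Trudi-type expansion in Theorem~\ref{th1234} and replace each $D_1(k)$ by $N/(N+k)$. Equivalently, one applies Trudi's formula (Lemma~\ref{lema0}) directly to the lower Hessenberg determinant of Theorem~\ref{th-cau1}: there the subdiagonal entry is $a_0=1$, so we are in the Brioschi case, and $a_k=N/(N+k)$ for $k\ge 1$, whence $(-a_0)^{\,n-t_1-\cdots-t_n}=(-1)^{\,n-t_1-\cdots-t_n}$. Either way $c_{N,n}/n!$ comes out as the sum over $t_1+2t_2+\cdots+nt_n=n$ of $\binom{t_1+\cdots+t_n}{t_1,\dots,t_n}(-1)^{\,n-t_1-\cdots-t_n}\bigl(N/(N+1)\bigr)^{t_1}\cdots\bigl(N/(N+n)\bigr)^{t_n}$, which after rewriting the multinomial coefficient and the exponent of $-1$ (noting $n-t_1-\cdots-t_n=t_2+2t_3+\cdots+(n-1)t_n$) gives the first displayed identity.

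For the inversion, set $R(j)=N/(N+j)$ and $\alpha_n=c_{N,n}/n!$. Then $\alpha_0=1$ because $c_{N,0}=1$, and Theorem~\ref{th-cau1} states precisely that $\alpha_n$ equals the Hessenberg determinant with entries $R(1),\dots,R(n)$ --- which is exactly the hypothesis of the inversion lemma, Lemma~\ref{lema}. That lemma then gives $R(n)=N/(N+n)$ as the Hessenberg determinant with entries $\alpha_1,\dots,\alpha_n$, i.e.\ the second displayed identity. As a consistency check, the defining recurrence~(\ref{rel:hgc}), multiplied by $N$ and rewritten via $(-1)^i=(-1)^n(-1)^{n-i}$, reads $\sum_{i=0}^n(-1)^{n-i}\alpha_i R(n-i)=0$ with $R(0)=1$, which is the relation on which Lemma~\ref{lema} rests.

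I do not expect a genuine obstacle here: the argument is a direct specialization of Theorem~\ref{th1234} combined with Lemmas~\ref{lema0} and~\ref{lema}. The only point that needs care is purely combinatorial bookkeeping --- making the multinomial coefficient and the sign produced by Trudi's formula agree with the exact form written in the statement.
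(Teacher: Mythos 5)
Your proposal is correct and follows the paper's own route: Corollary \ref{cor1} is obtained exactly as the $r=1$ specialization of Theorem \ref{th1234} (equivalently, Trudi's formula of Lemma \ref{lema0} applied to the Hessenberg determinant of Theorem \ref{th-cau1}) together with the inversion Lemma \ref{lema}, using $c_{N,n}^{(1)}=c_{N,n}$ and $D_1(e)=N/(N+e)$. One caveat: Trudi's formula yields $\binom{t_1+\cdots+t_n}{t_1,\dots,t_n}(-1)^{\,n-t_1-\cdots-t_n}$, which is the correct form (compare Corollary \ref{cor2}); the printed statement of Corollary \ref{cor1} has the top of the multinomial coefficient and the exponent of $-1$ interchanged (apparently a typo), and your proposed ``rewriting'' via $n-t_1-\cdots-t_n=t_2+2t_3+\cdots+(n-1)t_n$ does not actually convert one expression into the other, so you should simply state the Trudi form you derived.
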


When $r=N=1$ in Theorem \ref{th1234}, we have a different expression of the classical Cauchy numbers. 

\begin{Cor}  
We have for $n\ge 1$ 
\begin{multline*} 
c_{n} 
=n!\sum_{t_1 + 2t_2 + \cdots + nt_n=n}\binom{t_1+\cdots + t_n}{t_1, \dots,t_n}(-1)^{n-t_1-\cdots-t_n}\\
\times\left(\frac{1}{2}\right)^{t_1}\left(\frac{1}{3}\right)^{t_2}\cdots \left(\frac{1}{n+1}\right)^{t_n}  
\end{multline*}  
and  
$$
\frac{1}{n+1}=\begin{vmatrix} \frac{c_{1}}{1!} & 1 & & \\
\frac{c_{2}}{2!} & \ddots &  \ddots & \\
\vdots & \ddots &  \ddots & 1\\
\frac{c_{n}}{n!} & \cdots &  \frac{c_{2}}{2!} & \frac{c_{1}}{1!} \\
 \end{vmatrix}.
$$
\label{cor2}
\end{Cor}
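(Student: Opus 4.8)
The plan is to obtain Corollary~\ref{cor2} as the literal special case $r=N=1$ of Theorem~\ref{th1234}, so essentially no new computation is required beyond bookkeeping. First I would record what the ingredients of Theorem~\ref{th1234} become under this specialization. Setting $r=1$ in the defining generating function $(\ref{def:higher-hgc})$ gives $\sum_{n\ge 0}c_{N,n}^{(1)}x^n/n!=1/{}_2F_1(1,N;N+1;-x)$, which by $(\ref{def:hgc})$ is $\sum_{n\ge 0}c_{N,n}x^n/n!$; hence $c_{N,n}^{(1)}=c_{N,n}$, and putting also $N=1$ gives $c_{1,n}^{(1)}=c_{1,n}=c_n$, the classical Cauchy number. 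Next, from $(\ref{mre})$ with $r=1$ the quantity $D_1(e)$ is the single-index sum $\sum_{i_1=e,\,i_1\ge 0}N/(N+i_1)=N/(N+e)$, which at $N=1$ equals $1/(e+1)$.

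Then I would substitute these values directly into the two assertions of Theorem~\ref{th1234}. Its Trudi-type expansion reads $c_{N,n}^{(r)}=n!\sum_{t_1+2t_2+\cdots+nt_n=n}\binom{t_1+\cdots+t_n}{t_1,\dots,t_n}(-1)^{n-t_1-\cdots-t_n}D_r(1)^{t_1}\cdots D_r(n)^{t_n}$; replacing $c_{N,n}^{(r)}$ by $c_n$ and $D_1(e)^{t_e}$ by $(1/(e+1))^{t_e}$ for $e=1,\dots,n$ produces exactly the first displayed formula of Corollary~\ref{cor2}. For the inversion part, the ``Moreover'' statement of Theorem~\ref{th1234} expresses $D_r(n)$ as the Hessenberg determinant with first-column entries $c_{N,e}^{(r)}/e!$; specializing $r=N=1$ turns the left side into $1/(n+1)$ and the matrix entries into $c_e/e!$, giving the second display of the corollary. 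Equivalently, one may invoke the inversion Lemma~\ref{lema} with $\alpha_n=c_n/n!$ and $R(e)=1/(e+1)$: by Theorem~\ref{th-cau1} at $N=1$ the number $c_n/n!$ is precisely the Hessenberg determinant built from the $R(e)$'s, so Lemma~\ref{lema} yields $1/(n+1)=R(n)$ as the determinant built from the $\alpha_e=c_e/e!$.

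There is no genuine obstacle here; the only points needing a line of verification are $c_{N,n}^{(1)}=c_{N,n}$ (immediate, since the $r$th power with $r=1$ is the generating function itself) and the evaluation $D_1(e)=N/(N+e)$, after which both displays of Corollary~\ref{cor2} are restatements of the corresponding parts of Theorem~\ref{th1234}. As an alternative route one could bypass Theorem~\ref{th1234} altogether and derive both formulas from Theorem~\ref{th-cau1} at $N=1$ together with Trudi's formula (Lemma~\ref{lema0}) applied to the Hessenberg matrix there and the inversion Lemma~\ref{lema}; but routing through Theorem~\ref{th1234} reduces the proof to a single sentence of specialization.
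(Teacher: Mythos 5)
Your proposal is correct and matches the paper's approach: the paper offers no separate proof, presenting Corollary~\ref{cor2} as the immediate specialization $r=N=1$ of Theorem~\ref{th1234}, exactly as you do (with the routine checks $c_{N,n}^{(1)}=c_{N,n}$ and $D_1(e)=N/(N+e)$, hence $1/(e+1)$ at $N=1$).
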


\section{ADDITIONAL COMMENTS}  

Hypergeometric Cauchy numbers are not integers, but fractions.  Hence, combinatorial interpretations of the above results or congruent relations seem to be difficult to obtain.  
Nevertheless, definition (\ref{def:hgc}) is not obvious or artificial, but has motivations from Combinatorics, in particular, graph theory.  In 1989, Cameron \cite{Cameron} considered the operator $A$ defined on the set of sequences of non-negative integers as follows. For $x=\{x_n\}_{n\ge 1}$ and $z=\{z_n\}_{n\ge 1}$, let $A x=z$, where 
\begin{equation}  
1+\sum_{n=1}^\infty z_n t^n=\left(1-\sum_{n=1}^\infty x_n t^n\right)^{-1}\,.
\label{cameron}
\end{equation}    
For hypergeometric Cauchy numbers, we have 
$$
x_n=\frac{c_{N,n}}{n!}\quad\hbox{and}\quad z_n=\frac{(-1)^{n-1}N}{N+n}
$$ 
and vice versa.  
If $x$ enumerates a class $C$, then $A x$ enumerates the class of disjoint unions of members. More concrete examples can be seen in \cite{Cameron}. 

\begin{center}

\end{center}
\end{document}